\documentclass[11pt]{amsart}

\usepackage[article]{robertpreamble}
\usepackage{geometry}
\begin{document}


\title[Equidistribution of Eigenfunctions of Quantum Cat Maps]{Equidistribution of Eigenfunctions of Quantum Cat Maps}

\author{Robert Koirala}
\address{Department of Mathematics, University of California San Diego}
\email{rkoirala@ucsd.edu}
\date{\today}

\begin{abstract}
    We prove that the short-period eigenfunctions of quantum cat maps constructed by Kim and the author equidistribute on $\bT^2$ in the sense of semiclassical measures. We also show that their logarithmically large $\ell^\infty$-norm is asymptotically concentrated on a bounded number of coordinates. Thus, for this explicit family, strong coordinate localization coexists with semiclassical equidistribution. These results confirm the behavior suggested by earlier numerical evidence of Kim and the author, and contrast with the scarring phenomena for short-period eigenfunctions observed by Faure, Nonnenmacher, and De Bièvre.
\end{abstract}

\maketitle

\section{Introduction}\label{sec:introduction}

Consider a map
\begin{align}\label{eq:def-of-A}
    A=\begin{pmatrix}a&b\\ c&d\end{pmatrix}\in \mathrm{SL}(2,\Z),\qquad
    ab,cd\in 2\Z,\qquad
    2<\tr(A)\in 2\Z,\qquad
    \gcd(b,c)=1.
\end{align}
The map \(A\) induces a \textit{chaotic} automorphism of \(\bT^2\coloneqq \mathbb{R}^{2}/\mathbb{Z}^2\), called a \textit{classical cat map}, in the sense that $\{A^t(x)\colon t\in \Z\}$ equidistributes on $\bT^2$ for almost all $x\in \bT^2$ \cite{MR8647}. For a given $A$ and a Planck parameter $N\in \N$, one associates a unitary operator \(M_{N,\theta}\), called a \textit{quantum cat map}, acting on an \(N\)-dimensional Hilbert space \(\ceH_N(\theta)\) of \textit{quantum states;} see Section \ref{sec:background}. The fundamental link between the classical and quantum dynamics is the exact Egorov identity between any observable $a\in C^\infty(\bT^2)$ and its quantized operator $\Op_{N,\theta}(a)$:
\begin{align}
    M_{N, \theta}^{-1} \Op_{N, \theta} (a) M_{N, \theta} =\Op_{N, \theta} (a \circ A).
\end{align}
Thus the quantum evolution generated by \(M_{N,\theta}\) quantizes the classical evolution generated by \(A\). In this paper we study the semiclassical distribution and coordinate profile, as $N\to \infty$, of a family of short-period eigenfunctions of \(M_{N,0}\) constructed by Kim and the author in \cite{kim-koirala-2023bounds}.

\begin{figure}[!htb]
    \centering
    \includegraphics[width=0.23\linewidth]{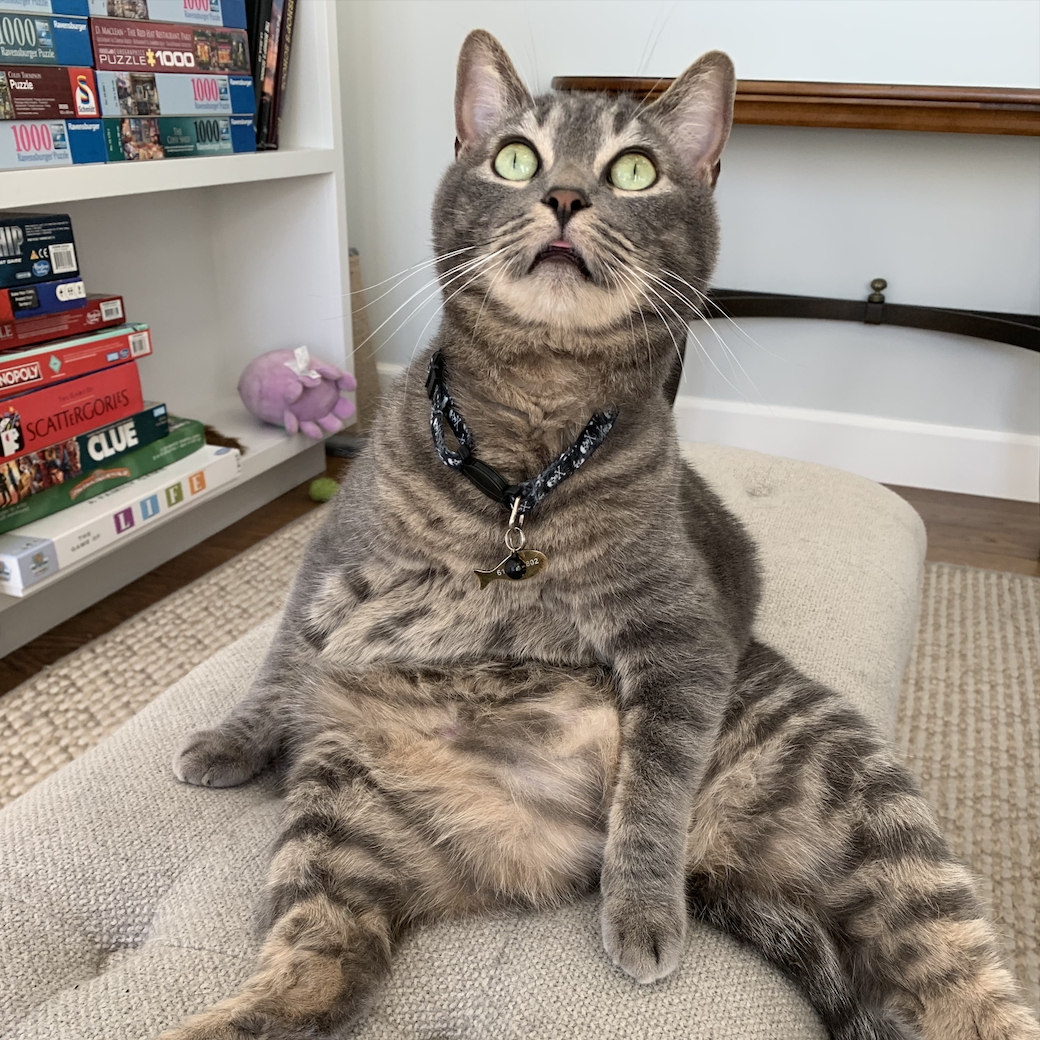}\hfill
    \includegraphics[width=0.23\linewidth]{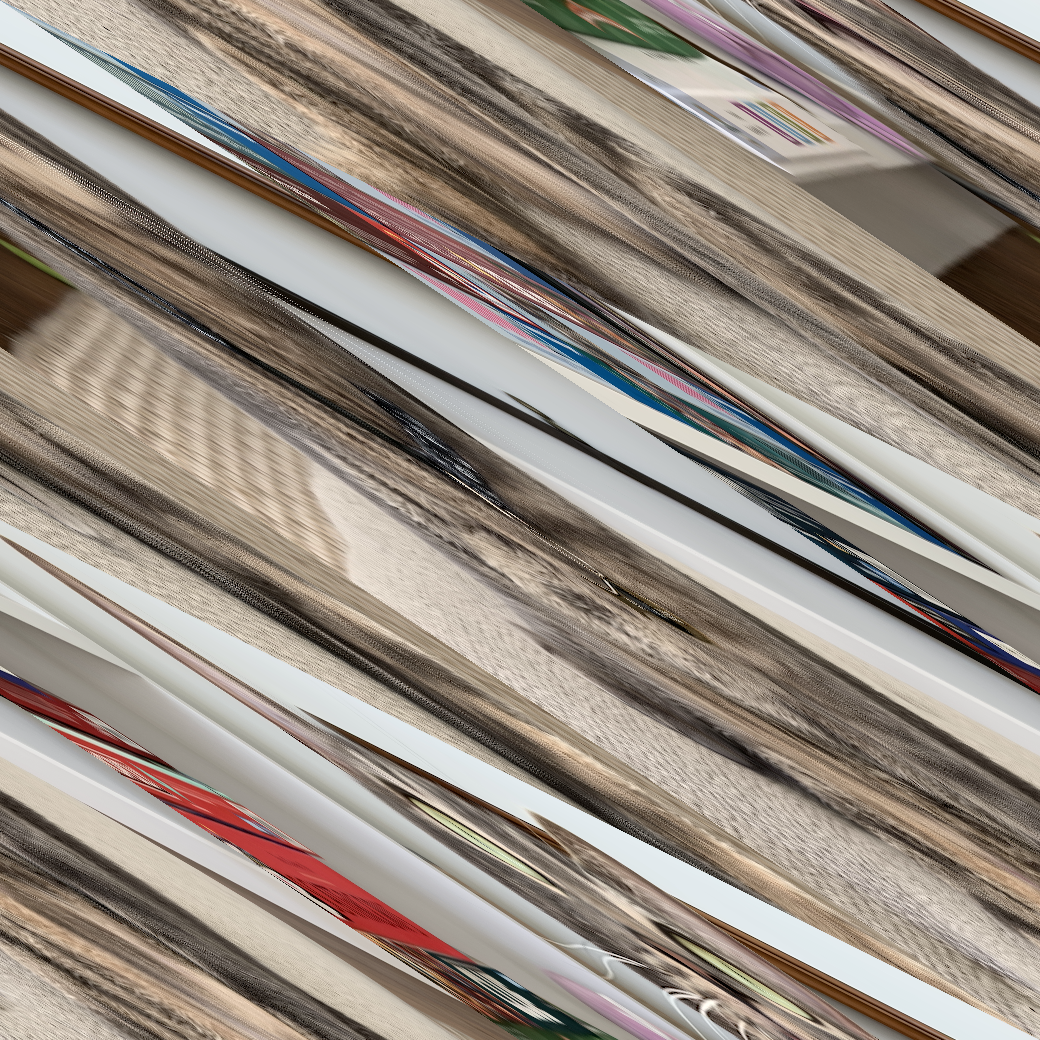}\hfill
    \includegraphics[width=0.23\linewidth]{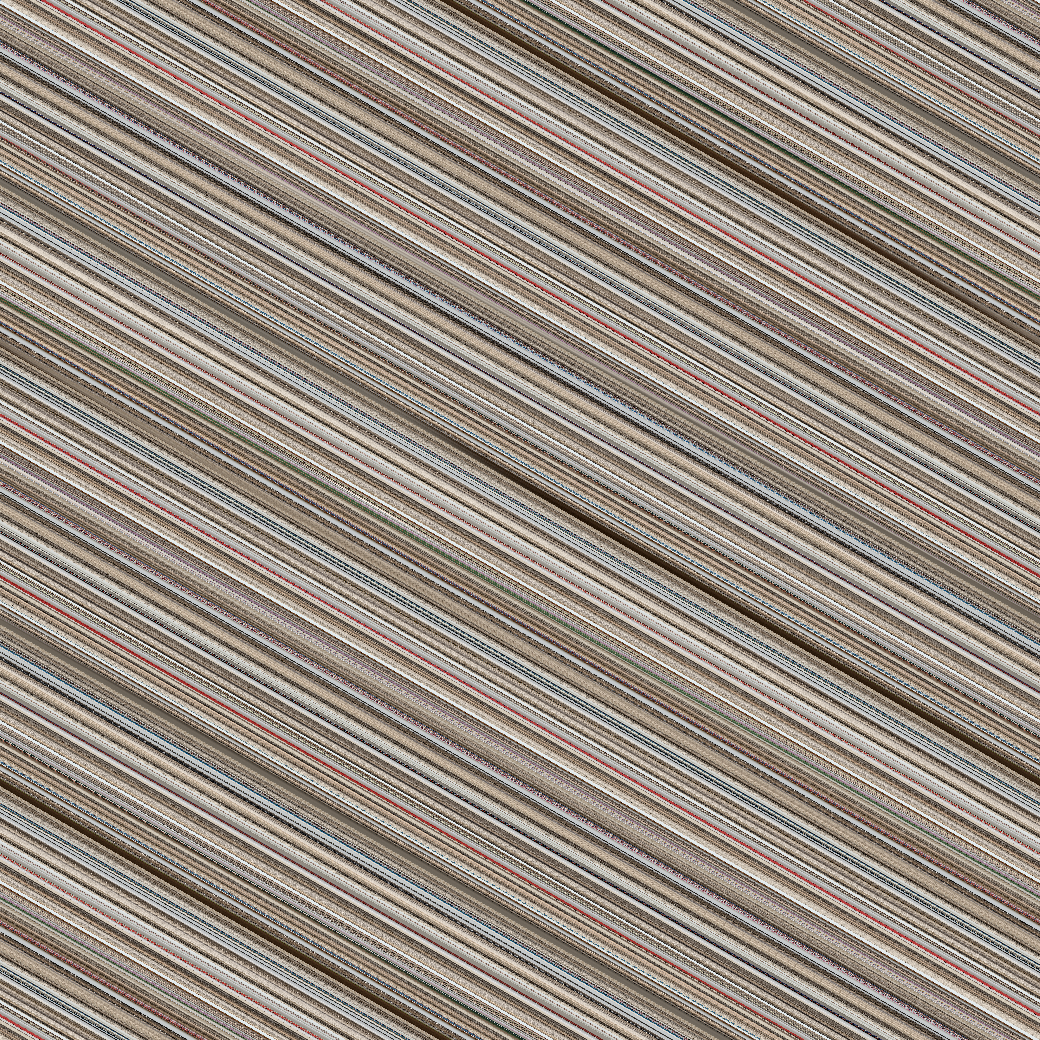}\hfill
    \includegraphics[width=0.23\linewidth]{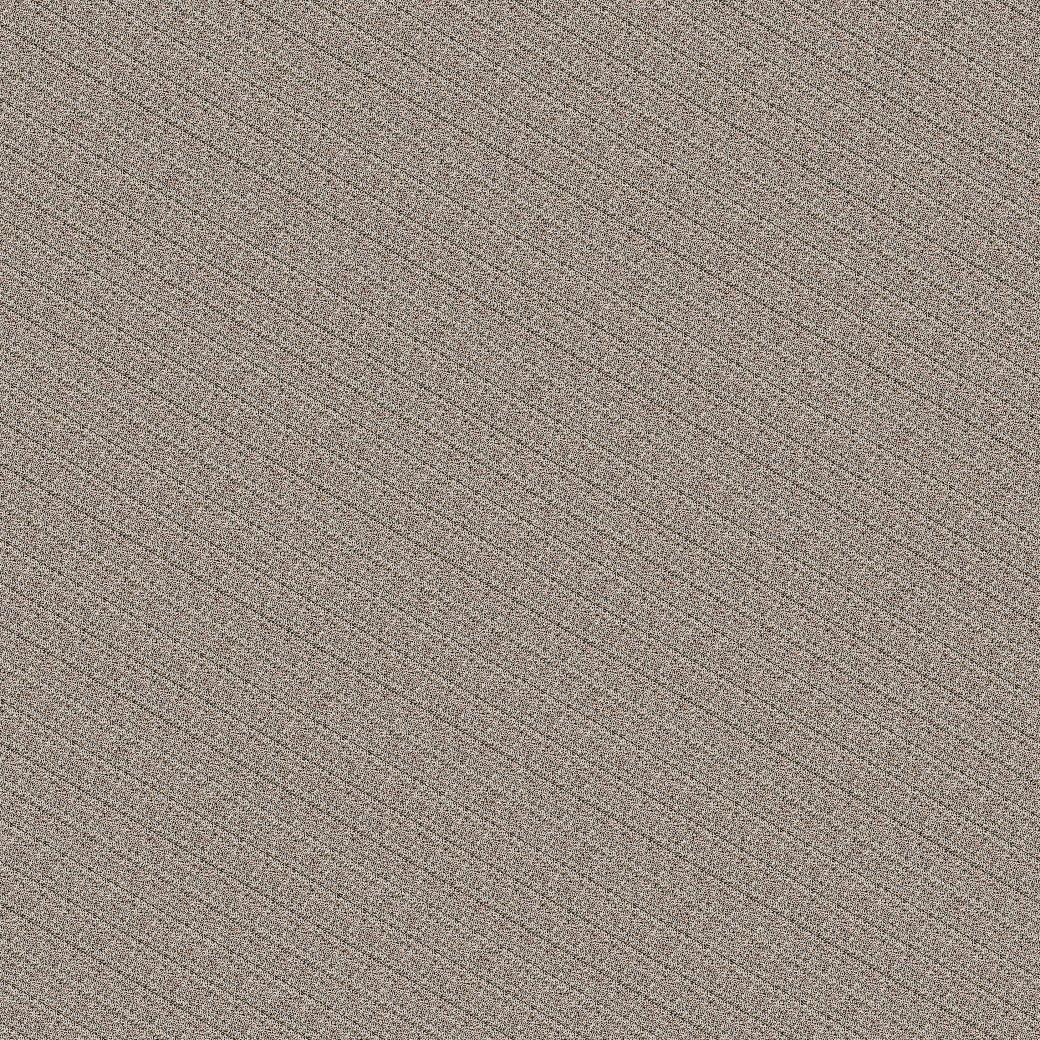}
    \caption{Iterates of the cat map $A=\begin{pmatrix}2&3\\1&2\end{pmatrix}$ applied to an image of a cat. From left to right: the original photograph and its images after $1$, $3$, and $5$ iterations. Original photo courtesy of Laurie Ellsworth.} 
    \label{fig:cat-map}
\end{figure}

Quantum cat maps, introduced by Hannay and Berry \cite{MR602111}, are discrete models in \textit{quantum chaos}, where one studies how chaotic classical dynamics constrain the quantum states in the high-frequency limit. They serve as a testing ground for questions about mass and distribution of the Laplace eigenfunctions on negatively curved manifolds as the eigenvalues go to infinity. A classical cat map plays the role of geodesic flow on negatively curved manifolds which is known to be chaotic \cite{MR224110}. The torus \(\bT^2\) plays the role of position-momentum space, the eigenfunctions of \(M_{N,\theta}\) play the role of Laplace eigenfunctions, and the limit \(N\to\infty\) corresponds to the high-frequency limit.

Bouzouina--De Bi\`evre and Kurlberg--Rudnick proved \textit{quantum ergodicity} for quantum cat maps showing that eigenfunctions of $M_{N,\theta}$ equidistribute in position-momentum space as $N\to \infty$ \cite{BouzouinaDeBievre, MR1853869}. This is analogous to the quantum ergodicity theorem of \cite{ MR818831, MR402834, MR916129} which proves that a density-one subsequence of eigenfunctions equidistributes in the high-frequency limit. Rivi\`ere studied entropy of limits of eigenfunctions of $M_{N,\theta}$ as another measure of quantum chaos \cite{RiviereEntropy}. More recent developments include that of Schwartz on delocalization \cite{SchwartzThesis,SchwartzPAA} and some higher-dimensional works \cite{bhakta2026quantum, MR4703425, MR2630057, kim2024characterizing,MR4927803}. For further related works, see \cite{MR2563805, MR1316757, MR1107009}. 

In the Laplace eigenfunction literature, a stronger \textit{quantum unique ergodicity} (QUE) conjecture of \cite{MR1266075} predicts that, on closed negatively curved manifolds, \textit{every} sequence of eigenfunctions equidistributes toward Liouville measure on the position-momentum space in the high-frequency limit. For related results, see a recent survey \cite{dyatlov2023macroscopic} and the references therein. Unlike the manifold setting, however, the direct analogue of quantum unique ergodicity fails for quantum cat maps. Faure, Nonnenmacher, and De Bi\`evre constructed eigenfunctions of $M_{N,\theta}$ with short period which have nontrivial atomic component and therefore do not converge to the volume measure on $\bT^2$ \cite{MR2036373,MR2000926}. These examples show that short quantum periods can lead to \textit{scarring} phenomena.

\begin{figure}[!htb]
    \centering
    \includegraphics[width=0.35\linewidth]{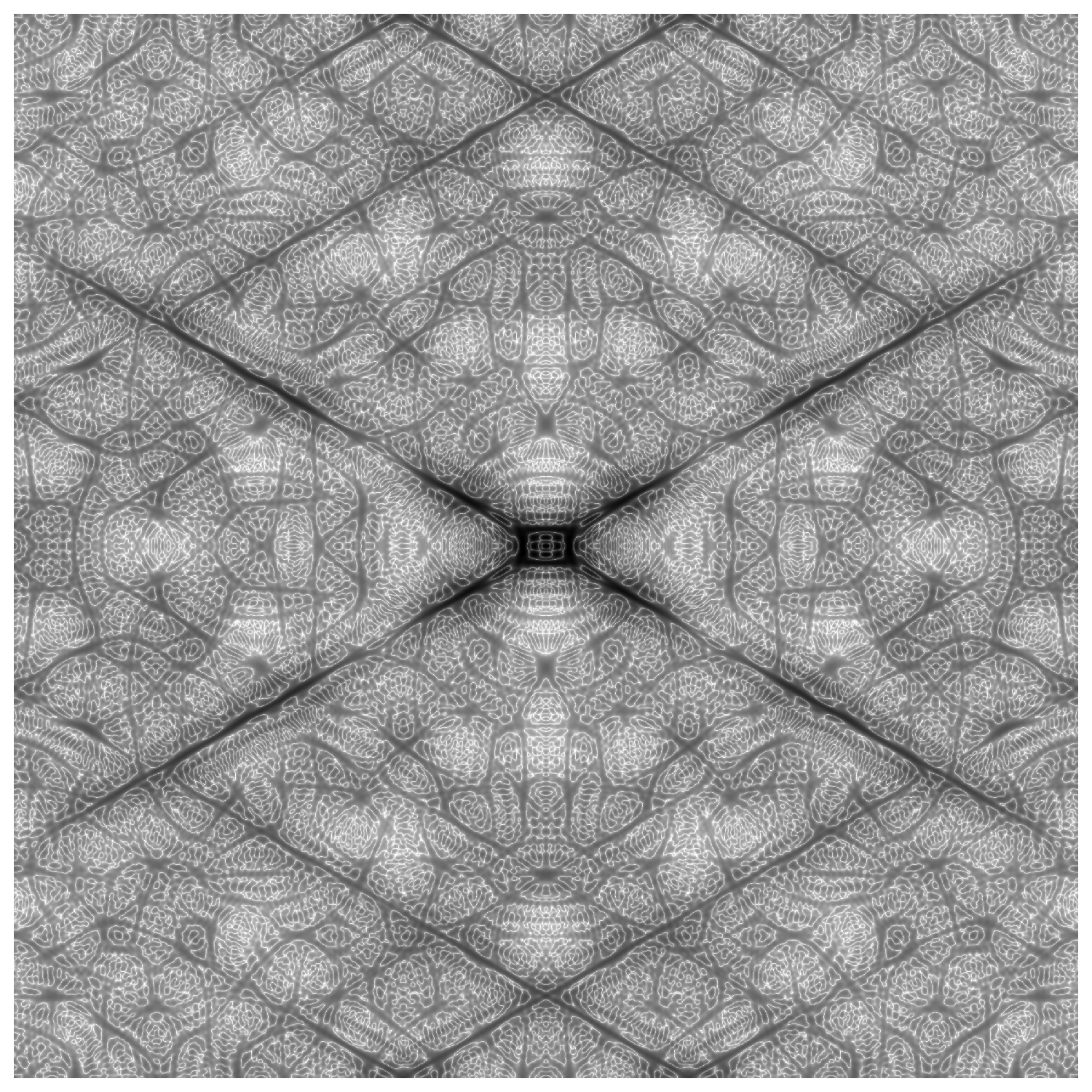} \hspace{0.5in}\includegraphics[width=0.35\linewidth]{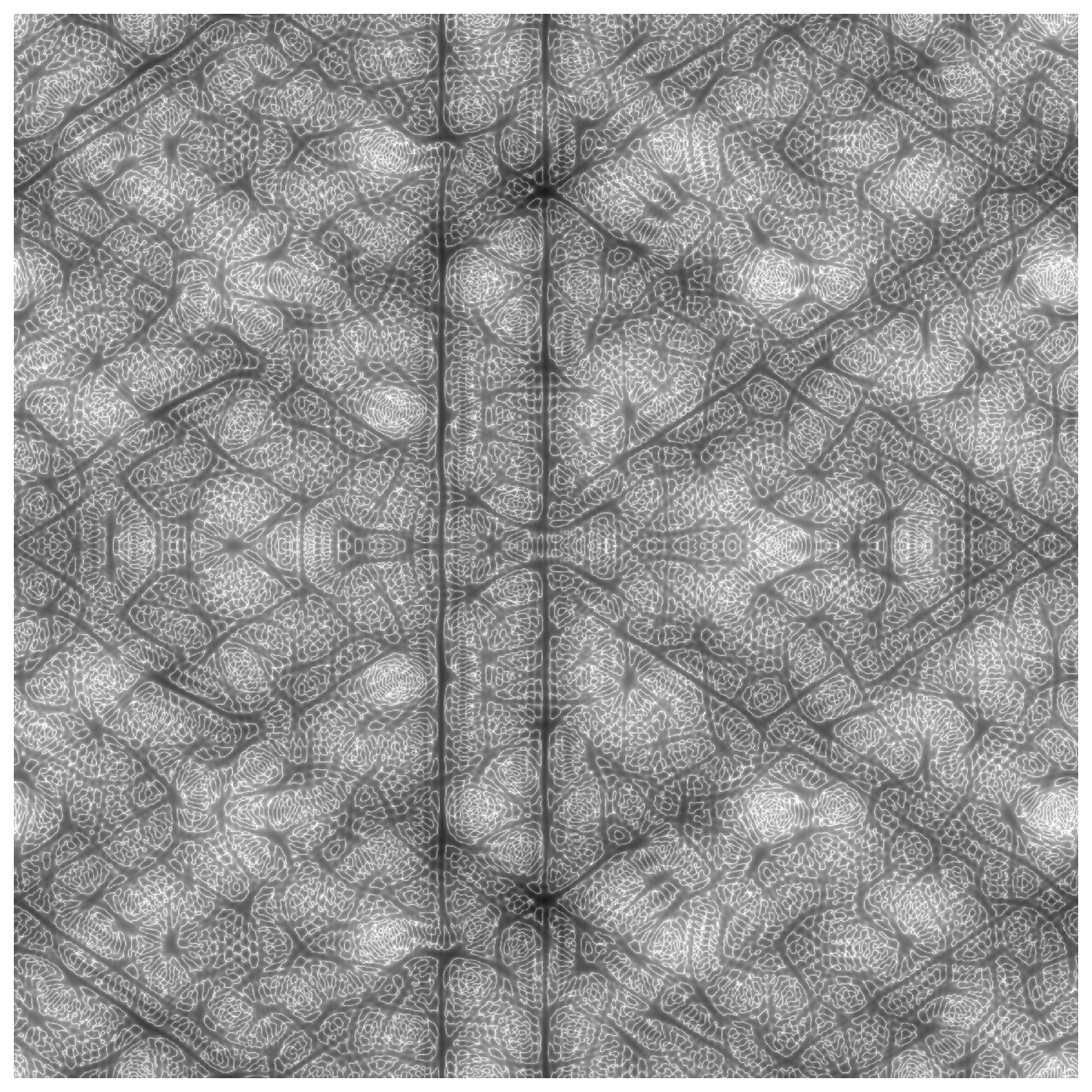}
    \caption{\cite[Figure 4]{kim-koirala-2023bounds}. Gaussian-smoothed Wigner functions of eigenfunctions of the quantized cat map corresponding to $A=\begin{pmatrix}2&3\\1&2\end{pmatrix}$, shown on a logarithmic grayscale. Left: a scarred eigenfunction, with visible concentration along classical periodic structures. Right: an eigenfunction of the form \eqref{eq:vk-intro} whose position-momentum mass is more uniformly distributed, illustrating equidistribution.}
    \label{fig:equidistribution}
\end{figure}

To further understand the short-period behavior, in \cite{kim-koirala-2023bounds}, the authors proved a logarithmic upper bound for the \(\ell^\infty\)-norm of eigenfunctions of quantum cat maps, and showed that this bound is essentially sharp along sequences of large spectral multiplicity by constructing explicit short-period projector states of the form \eqref{eq:vk-intro}. A natural question is whether this explicit near-extremal family behaves like the equidistributed eigenstates appearing in general quantum unique ergodicity results \cite{BouzouinaDeBievre,MR1853869}, or instead resembles the explicit short-period scarred families constructed in \cite{MR2036373,MR2000926}. Numerical evidence in \cite[Figure~4]{kim-koirala-2023bounds} suggested that these near-extremal states might equidistribute in position and momentum. The purpose of the present paper is to prove this phenomenon.

Our main result shows that, for this explicit family, strong localization in the standard coordinate basis coexists with semiclassical equidistribution. More precisely, the normalized short-period projector states have logarithmically large \(\ell^\infty\)-norm, asymptotically concentrated on a bounded number of coordinates, but they converge to Lebesgue measure on \(\bT^2\) as $N\to \infty$.

We now state the results. Fix \(A\) satisfying \eqref{eq:def-of-A}. For \(k\in\N\), let \(N'_k\) be the largest positive integer \(N\) such that \(A^k\equiv I\pmod N\). Given \(N\), let \(n(N)\) denote the quantum period of \(M_{N,0}\):
\begin{align*}
    n(N)\coloneqq
    \min\{t\ge 1: M_{N,0}^t=e^{i\varphi}I
    \text{ for some }\varphi\in\R\}.
\end{align*}
For \(k\in\N_0\), define the odd short-period sequence by
\begin{align}
    N_k\coloneqq N'_{2k+1},\qquad
    t_k\coloneqq n(N_k)=2k+1,\qquad
    M_k\coloneqq M_{N_k,0}.
\end{align}
Then \(M_k^{t_k}=e^{i\varphi_k}I\) for some \(\varphi_k\in\R\). Fix
\(\sigma_k\in\Z\), and set
\begin{align}
    \omega_k\coloneqq
    \exp\!\left(i\frac{\varphi_k+2\pi\sigma_k}{t_k}\right).
\end{align}
For each \(k\), choose \(j_k\in\{0,\dots,N_k-1\}\), and define
\begin{equation}\label{eq:vk-intro}
    v_k\coloneqq
    \frac1{t_k}\sum_{t=0}^{t_k-1}\omega_k^{-t}M_k^t e_{j_k}^0,
    \qquad
    u_k\coloneqq \frac{v_k}{\Verts{v_k}}
\end{equation}
whenever \(v_k\neq0\). Here \(e_j^0\in\ceH_N(0)\) denotes the standard basis vector. Since \(\omega_k^{t_k}=e^{i\varphi_k}\), each nonzero \(u_k\) is an eigenfunction of \(M_k\). Moreover, \(N_k\to\infty\) as \(k\to\infty\) \cite{MR1773813}.

Our first theorem states that, for every observable \(a\in C^\infty(\bT^2)\), the expectation of its quantization \(\Op_{N_k,0}(a)\) in the state \(u_k\) converges to the classical average of \(a\). Put it differently, the eigenfunctions \(u_k\) equidistribute in position-momentum space \(\bT^2\) as $N_k\to \infty$.

\begin{theorem}\label{thm:main-eqd}
    With the notation above, \(v_k\neq 0\) for all sufficiently large \(k\). Moreover, for every \(a\in C^\infty(\bT^2)\),
    \begin{align}\label{eq:main-eqd}
        \angles{\Op_{N_k,0}(a)u_k,u_k}
        =
        \int_{\bT^2} a\,d\mathrm{Leb}
        + O_a\!\left(\frac1{\log N_k}\right)
        \qquad\text{as }k\to\infty.
    \end{align}
\end{theorem}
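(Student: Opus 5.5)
The plan is to reduce everything to matrix coefficients of the quantized translations, via a Fourier expansion of $a$ and the Egorov identity. Write $a=\sum_{n\in\Z^2}\hat a(n)e_n$ with $e_n(x)=e^{2\pi i n\cdot x}$, and set $T_n\coloneqq\Op_{N,0}(e_n)$. These are the quantized translations: up to a phase, $T_n e_j^0=e^{i(\cdot)}e^{0}_{j+n_1}$, and $T_n$ is a scalar when $n\equiv 0\pmod N$. Since $\hat a$ decays rapidly, it suffices to prove two things. First, $\Verts{v_k}^2=t_k^{-1}(1+o(1))$, which in particular gives $v_k\neq0$. Second, for each fixed $n\neq0$ (and the whole Fourier sum, with polynomial dependence on $|n|$),
\[
|\angles{T_nv_k,v_k}|\lesssim_n t_k^{-2}.
\]
Dividing the second by the first gives $\angles{T_nu_k,u_k}=O_n(1/t_k)=O_n(1/\log N_k)$, using $t_k\asymp\log N_k$ (this holds because $N_k\le\|A^{t_k}-I\|\lesssim\lambda^{t_k}$, and conversely $N_k\gtrsim\lambda^{t_k/2}$, say, by the known size results for $N'_k$ \cite{MR1773813}). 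The $n=0$ term then produces $\int a\,d\mathrm{Leb}$.

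Both quantities are expanded the same way:
\[
\angles{T_nv,v}=t^{-2}\sum_{s,t'}\omega^{s-t'}\angles{M^{-s}T_nM^{s}\,M^{t'-s}e_j,e_j}.
\]
By Egorov, $M^{-s}T_nM^s$ is, up to a phase, $T_{nA^s}$. Grouping by $r=t'-s$ splits the sum into two parts.

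\emph{Diagonal part} ($r=0$). This part is $t^{-2}\sum_{s}\angles{T_{nA^s}e_j,e_j}$, and a term is nonzero only when $(nA^s)_1\equiv0\pmod N$. For $n=0$ the total is exactly $t^{-1}$, which gives the main term of $\Verts{v}^2$. For $n\neq0$, I will show that only $O_n(1)$ values of $s\in[0,t)$ qualify. Write $s$ or $s-t$ as the exponent of smaller absolute value, using $A^t\equiv I$. Then $nA^{s}$ is congruent to a vector of size $\lesssim|n|\lambda^{\min(s,t-s)}\le|n|\lambda^{t/2}\ll N$. Such a vector can vanish mod $N$ in its first coordinate only if that coordinate is actually $0$, and by hyperbolicity (irrational eigenvectors) this happens for at most one or two exponents. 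This yields the $t^{-2}$ bound.

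\emph{Off-diagonal part} ($r\neq0$). Here I use the explicit formula for $M^r$: it is the quantization of $A^r$, whose matrix entries have modulus $\sqrt{\gcd(b_r,N)/N}$, where $b_r$ is the upper-right entry of $A^r$. Combined with $T_{nA^s}$, which only permutes coordinates and multiplies by phases, this gives $|\angles{T_{nA^s}M^re_j,e_j}|\le\sqrt{\gcd(b_r,N)/N}$. The key arithmetic step is then to show $\gcd(b_r,N)\le N^{1-\delta}$ uniformly for $0<|r|<t$. Since $N\mid A^t-I$, any $d\mid\gcd(b_r,N)$ makes $A^r$ upper-triangular mod $d$, and the same then holds for $A^{t-r}$ and hence for $A^{\gcd(r,t)}$. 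So $d\mid b_{\gcd(r,t)}$, where $b_m$ is divisible by $b$ and grows like $\lambda^{m}$. Because $t=2k+1$ is odd, $\gcd(r,t)\le t/3$, so $d\lesssim\lambda^{t/3}\lesssim N^{2/3+o(1)}$. Each off-diagonal term is therefore $O(N^{-1/6+o(1)})$, and the $t^2$ of them contribute $O(N^{-1/6+o(1)})\ll t^{-2}$, both for the norm and for every $n$.

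The main obstacle is this gcd estimate, together with the lower bound $N_k\ge\lambda^{ct_k}$ with $c$ large enough relative to $1/3$. If $c$ turns out smaller, I would sharpen the argument by noting that $N'_t$ and $b_{\gcd(r,t)}$ share only factors dividing $N'_{\gcd(r,t)}$, and that $N'_{t}/N'_{d}$ is large for proper divisors $d$ of $t$. The diagonal count is routine by comparison.
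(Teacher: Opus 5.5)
Your overall plan matches the paper's proof: Fourier modes, exact Egorov, a diagonal/off-diagonal split, the dispersive bound $\sqrt{\gcd(b_{|r|},N)/N}$ of Lemma~\ref{lem:gauss}, and an arithmetic bound on $\gcd(b_{|r|},N)$ that uses the oddness of $t$. Your gcd argument differs from the paper's and is valid. The exponents $m$ for which $A^m$ is triangular modulo $d$ form a subgroup of $\mathbb{Z}$ containing $r$ and $t$; the condition $d\mid b_r$ makes $A^r$ lower-triangular rather than upper-triangular, but that does not matter. Hence $\gcd(b_{|r|},N)\mid b_{\gcd(r,t)}$, so $\gcd(b_{|r|},N)\ll\lambda^{t/3}$ and each off-diagonal term is $O(N^{-1/6})$. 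The paper instead shows that $A^r$ is scalar modulo $\gcd(N,p_r)$, so $A^{2r}\equiv I$ there. This gives the sharper Lemma~\ref{lem:gcd}, $\gcd(N'_t,b_r)\le|b|N'_{\gcd(t,2r)}\ll\lambda^{t/6}$, and per-term decay $N^{-1/3}$. Because $N'_{2k+1}=p_k+p_{k+1}\asymp\lambda^{(t+1)/2}$ by \eqref{eq:estimate-on-N}, your weaker bound already suffices, so the obstacle you raise at the end does not occur.

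The step that fails as written is the diagonal count, the one you call routine. The inequality $|n|\lambda^{\min(s,t-s)}\le|n|\lambda^{t/2}\ll N$ is false, because $N_k=p_k+p_{k+1}$ is itself of order $\lambda^{t_k/2}$. The two sides differ only by a factor comparable to $C_A|n|$. So for exponents within about $\log_\lambda(C_A|n|)$ of $t/2$, the integer lift of the relevant coordinate of $nA^s$ need not lie in $(-N/2,N/2)$. There, the passage from $\equiv0\pmod N$ to $=0$ is unjustified. This happens already for small $|n|$ at the middle exponents, depending on $A$.

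No bound of ``one or two exponents'' uniform in $n$ can hold. For $n\in N\mathbb{Z}^2\setminus\{0\}$, which you must control when summing the Fourier series, $T_n$ is a scalar and all $t$ diagonal terms are nonzero.

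The repair is the paper's Lemma~\ref{lem:resonance}:
\begin{itemize}
\item Choose $L_n\ll_A 1+\log(1+|n|)$ such that $C_A|n|\lambda^{t/2-L_n}<N/2$.
\item Apply your hyperbolicity argument only when $\min(s,t-s)\le t/2-L_n$; this gives at most two solutions on each side.
\item Count the $O(L_n)$ exponents in the middle window trivially.
\end{itemize}
This yields a count $\ll_A 1+\log(1+|n|)$ uniformly in $k$. That gives both your $O_n(1)$ claim and the mild growth in $|n|$ needed to conclude $\sum_n|\hat a(n)|\,(1+\log(1+|n|))/t=O_a(1/t)$.
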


The notation \(O_a(\cdot)\) means that the implied constant depends on the observable \(a\). 

Although the sequence \(u_k\) equidistributes in position-momentum space, its mass in the standard coordinate basis is concentrated at a single coordinate to leading order.

\begin{figure}[!htb]
    \centering
    \includegraphics[width=1\linewidth]{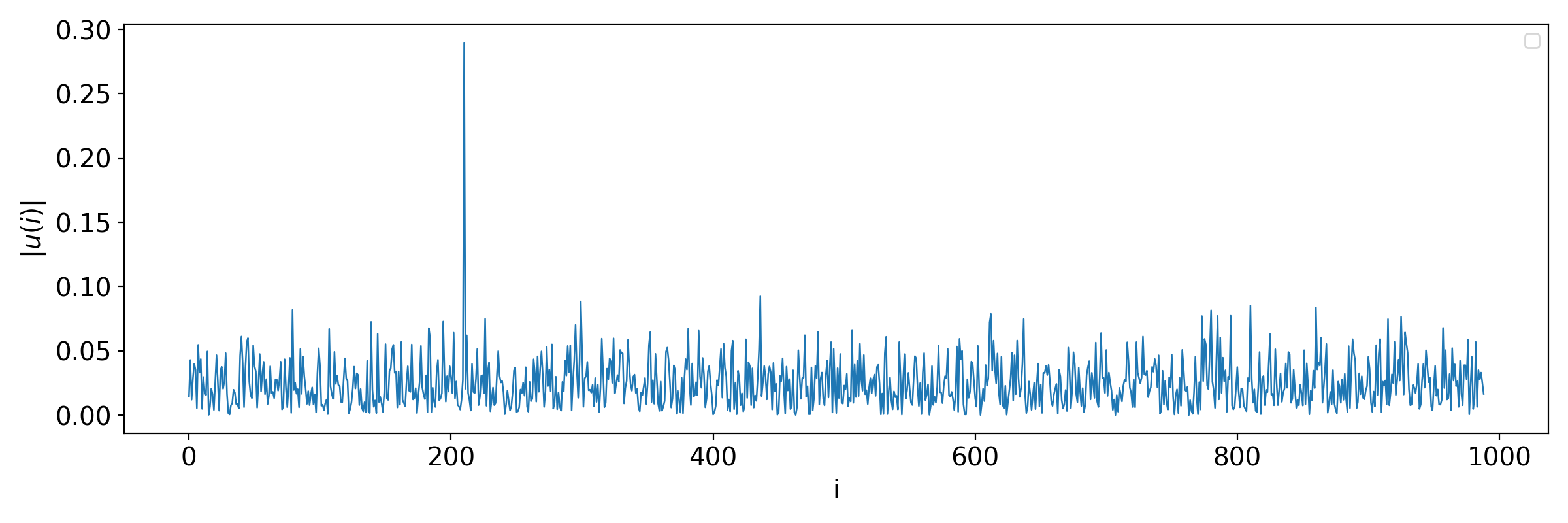}
    \caption{Coordinate profile of an \(\ell^2\)-normalized eigenfunction of \(M_{N,0}\) with large \(\ell^\infty\)-norm, for
    \(A=\begin{pmatrix}2&3\\1&2\end{pmatrix}\) and \(N=989\).}
    \label{fig:coordinate-spike}
\end{figure}

\begin{theorem}\label{thm:main-profile}
    There exists \(c_0>0\), depending only on \(A\), such that the following holds. Write $u_k(\ell)\coloneqq \angles{u_k,e_\ell^0}.$ Then for all sufficiently large \(k\),
    \begin{subequations}
        \begin{align}
            u_k(j_k)&=\frac{1}{\sqrt{t_k}}+O_A\!\bigl(\sqrt{t_k}\,N_k^{-c_0}\bigr),\label{eq:uk-large}\\
            u_k(\ell)&=O_A\!\bigl(\sqrt{t_k}\,N_k^{-c_0}\bigr)
            \qquad (\ell\neq j_k).\label{eq:uk-small}
        \end{align}
    \end{subequations}
    In particular, \(j_k\) is the unique coordinate at which \(|u_k(\ell)|\) is maximized for all sufficiently large \(k\), and
    \begin{align}\label{eq:uk-norm}
        \|u_k\|_{\ell^\infty}
        =
        \frac{1}{\sqrt{t_k}}+O_A\!\bigl(\sqrt{t_k}\,N_k^{-c_0}\bigr).
    \end{align}
\end{theorem}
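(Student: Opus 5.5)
Throughout, write \(v_k(\ell)\coloneqq\angles{v_k,e^0_\ell}\). The plan is to compute the coordinates of \(v_k\) directly and then normalize. By definition,
\[
v_k(\ell)=\frac1{t_k}\sum_{t=0}^{t_k-1}\omega_k^{-t}\angles{M_k^t e_{j_k}^0,e_\ell^0}.
\]
The \(t=0\) term contributes \(\frac1{t_k}\delta_{\ell j_k}\). For \(1\le t\le t_k-1\) I would bound the matrix coefficients of \(M_k^t\) uniformly. The quantum cat map \(M_{N,0}^t\) quantizes \(A^t=\begin{pmatrix}a_t&b_t\\c_t&d_t\end{pmatrix}\). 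Its kernel in the standard basis is, up to a unimodular factor, a normalized quadratic Gauss sum of the form \(\frac{1}{\sqrt{N}}\sqrt{\gcd(b_t,N)}\,e\big(\tfrac{\cdot}{N}\big)\) supported on a residue class mod \(\gcd(b_t,N)\). This gives
\[
\bigl|\angles{M_k^t e_j^0,e_\ell^0}\bigr|\le \sqrt{\gcd(b_t,N_k)/N_k}.
\]
I would take this explicit formula, as in \cite{kim-koirala-2023bounds} and the background section, as the starting point.

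The key step, and the main obstacle, is showing that \(\gcd(b_t,N_k)\le N_k^{1-2c_0}\) uniformly for \(1\le t<t_k=2k+1\). I would argue as follows. If \(p^e\) divides both \(N_k\) and \(b_t\), then \(A^t\) is lower triangular mod \(p^e\). Combined with \(A^{t_k}\equiv I\) mod \(N_k\), this forces restrictions on the order of \(A\) modulo \(p^e\). Using \(b_t=b\,U_{t-1}(\tr A)\) (Chebyshev/Lucas sequence) and \(N_k\mid \gcd\) of all entries of \(A^{2k+1}-I\), one relates \(\gcd(b_t,N_k)\) to \(\gcd(U_{t-1},U_{2k})\)-type quantities. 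By Lucas-sequence divisibility, \(\gcd(U_{m},U_n)=U_{\gcd(m+1,n+1)-1}\). Since \(|U_m|\asymp\lambda^m\) with \(\lambda>1\) the leading eigenvalue, and \(N_k\asymp \lambda^{k}\) (from \(A^{2k+1}-I\) having entries of size \(\lambda^{2k+1}\), with \(N'_{2k+1}\) essentially \(U_{k}\)-sized via the factorization \(A^{2k+1}-I\) for odd exponents), we get \(\gcd(b_t,N_k)\le C\lambda^{\,t'}\) with \(t'\) a proper divisor-type quantity at most about \((1-\delta)k\). This yields \(\gcd(b_t,N_k)\le C N_k^{1-2c_0}\). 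In practice I would try to cite the period/gcd analysis from \cite{kim-koirala-2023bounds}, where \(N_k\) and \(\ell^\infty\) sharpness are computed, and only supply the uniform gcd bound. This step is the delicate one, especially the exact size of \(N_k\) relative to \(\lambda^k\).

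Given that bound, \(|v_k(\ell)-\frac1{t_k}\delta_{\ell j_k}|\le N_k^{-c_0}\) for every \(\ell\). To normalize, I would avoid summing over all \(\ell\) and instead compute \(\|v_k\|^2=\angles{v_k,v_k}\) using that \(v_k=P_ke_{j_k}^0\), where \(P_k\) is the orthogonal spectral projector onto the \(\omega_k\)-eigenspace. Then
\[
\|v_k\|^2=\angles{P_ke_{j_k}^0,e_{j_k}^0}=v_k(j_k)=\frac1{t_k}+O(N_k^{-c_0}).
\]
In particular \(v_k\neq0\) for large \(k\), since \(t_k\asymp\log N_k\). Hence \(\|v_k\|^{-1}=\sqrt{t_k}\,(1+O(t_kN_k^{-c_0}))\). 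Multiplying gives \(u_k(j_k)=\frac{1}{\sqrt{t_k}}+O(\sqrt{t_k}N_k^{-c_0})\) and \(u_k(\ell)=O(\sqrt{t_k}N_k^{-c_0})\) for \(\ell\ne j_k\), after shrinking \(c_0\) to absorb powers of \(t_k\). Because \(1/\sqrt{t_k}\gg\sqrt{t_k}N_k^{-c_0}\), the maximizer is unique and \eqref{eq:uk-norm} follows.
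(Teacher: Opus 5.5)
Your proposal is correct. Its skeleton matches the paper's: a dispersive bound for $1\le t<t_k$, expansion of $v_k(\ell)$, normalization, then multiplication. Both supporting steps, however, take a different route.

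\textbf{The gcd bound.} The paper proves Lemma~\ref{lem:gcd}, $\gcd(N'_T,b_r)\le |b|\,N'_{\gcd(T,2r)}$, by an order argument. Modulo $g=\gcd(N'_T,p_r)$ one has $A^r\equiv -p_{r-1}I$, hence $A^{2r}\equiv I$. So the order of $A$ mod $g$ divides $\gcd(T,2r)$, and maximality of $N'_{\gcd(T,2r)}$ gives $g\mid N'_{\gcd(T,2r)}$.

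You instead use $N_k\mid p_{t_k}$, which follows from $N_k\mid bp_{t_k}$, $N_k\mid cp_{t_k}$ and $\gcd(b,c)=1$. Combined with Lucas strong divisibility $\gcd(p_t,p_{t_k})=p_{\gcd(t,t_k)}$, this gives $\gcd(b_t,N_k)\le |b|\,p_{\gcd(t,t_k)}\ll_A\lambda^{t_k/3}$.

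This is quicker once the classical Lucas fact is granted, but weaker. The reason is that $N'_d$ has size about $\lambda^{d/2}$, whereas $p_d$ has size about $\lambda^{d}$. The paper obtains $\gcd(N_k,b_t)\ll_A\lambda^{t_k/6}$ and hence $c_0$ close to $1/3$; you get $c_0$ close to $1/6$. This difference is immaterial for the statement.

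When you write it up, make two points explicit:
\begin{itemize}
\item The saving comes from $t_k$ being odd, so that $\gcd(t,t_k)\le t_k/3$. For $t_k=2k$ the divisor $k$ gives no saving, which is exactly the half-period phenomenon of Lemma~\ref{lem:even-half-period-2k}.
\item For the size of $N_k$ you only need the lower bound $N_k\ge p_k+p_{k+1}$. This follows from the factorization $A^{2k+1}-I=(p_k+p_{k+1})A^k(A-I)$, which is what your vague remark about "the factorization for odd exponents" should be.
\end{itemize}

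\textbf{The normalization.} The paper reuses the diagonal/off-diagonal double sum from the proof of Theorem~\ref{thm:main-eqd}. You observe that $\frac1{t_k}\sum_{t=0}^{t_k-1}(\omega_k^{-1}M_k)^t$ is an orthogonal projector, because $(\omega_k^{-1}M_k)^{t_k}=I$. This gives $\|v_k\|^2=v_k(j_k)$ in one line, and even $u_k(j_k)=\|v_k\|$ exactly, which is cleaner.

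Finally, no shrinking of $c_0$ is needed. The cross term $O(t_k^{3/2}N_k^{-2c_0})$ is already $O(\sqrt{t_k}\,N_k^{-c_0})$, since $t_kN_k^{-c_0}\to0$.
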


We also prove analogous results for even short-period sequences; see Section~\ref{sec:even}. In the even case a new phenomenon appears: when the quantum period lies in \(4\N\), the projector state \(v_k\) can vanish. In Proposition~\ref{prop:vanishing-happens}, we construct infinitely many such vanishing examples. Therefore the even analogue of Theorem~\ref{thm:main-eqd} must assume \(v_k\neq0\) along the chosen subsequence. On the other hand, Proposition~\ref{prop:vanishing-is-rare} shows that the vanishing condition is rare as \(k\to\infty\). The coordinate profile is also slightly different: instead of a unique coordinate spike, the mass is concentrated on at most four coordinates.

We briefly describe the proof of Theorem~\ref{thm:main-eqd}. By testing against torus Fourier modes and using Egorov's identity, one reduces the equidistribution statement to estimates for matrix elements of the form
\begin{align}
    \angles{\mathrm{Op}_{N,0}\!\bigl(e^{2\pi i(m_1x+m_2\xi)}\bigr)((A^{\mathsf T})^s m)M^{r-s}e_j^0,e_j^0}, \qquad m=(m_1,m_2)\in \Z^2.
\end{align}
The off-diagonal terms \(r\neq s\) are controlled by a higher-power version of the dispersive Gauss-sum estimate from \cite[Proposition~8]{kim-koirala-2023bounds}, combined with arithmetic estimates for the short-period sequence \(N_k=N'_{2k+1}\). The diagonal terms reduce to counting the times \(0\le s<t_k\) for which the second component of \((A^{\mathsf T})^s m\) vanishes modulo \(N_k\). The key observation is that this component grows like \(\lambda^{|s|}\), while \(N_k\asymp \lambda^{(t_k+1)/2}\). This yields a logarithmic bound for the number of resonant times and hence the rate in \eqref{eq:main-eqd}. The proof of Theorem~\ref{thm:main-profile} uses the same dispersive estimate directly on the standard-basis coordinates of \(v_k\).

\subsection*{Organization}
In Section~\ref{sec:background}, we recall the notation and background on quantum cat maps. In Section~\ref{sec:preliminary-lemma}, we prove the arithmetic and Gauss-sum estimates needed for the main arguments. In Section~\ref{sec:equidistribution}, we prove Theorem~\ref{thm:main-eqd}. In Section~\ref{sec:coordinate}, we prove Theorem~\ref{thm:main-profile}. Finally, in Section~\ref{sec:even}, we treat the even short-period case.

\subsection*{Acknowledgment}
We are thankful to Elena Kim for her detailed comments on earlier drafts and for drawing attention to \cite{SchwartzThesis}.

\section{Background}\label{sec:background}

We recall the notation for quantum cat maps used throughout the paper. We follow \cite[Section~2]{kim-koirala-2023bounds}; see also \cite{MR2000926}.

For a Schwartz symbol \(a\in \mathscr S(\R^2)\) and a semiclassical parameter \(h\in(0,1]\), its Weyl quantization is defined by
\begin{align}\label{eq:weyl-quantization}
    \Op_h(a)f(x)
    \coloneqq
    \frac{1}{2\pi h}
    \int_{\R^2}
    e^{\frac{i}{h}(x-x')\xi}
    a\!\left(\frac{x+x'}2,\xi\right)
    f(x')\,dx'\,d\xi,
    \qquad f\in\mathscr S(\R).
\end{align}
We also use the standard symbol class
\begin{align*}
    S(1)
    =
    \left\{
    a\in C^\infty(\R^2):
    \sup_{(x,\xi)\in\R^2}
    |\partial_{(x,\xi)}^\alpha a(x,\xi)|<\infty
    \text{ for all } \alpha\in\N_0^2
    \right\},
\end{align*}
with seminorms
\begin{align*}
    \|a\|_{C^m}
    \coloneqq
    \max_{|\alpha|\le m}
    \sup_{\R^2}|\partial_{(x,\xi)}^\alpha a|,
    \qquad m\in\N_0.
\end{align*}
For \(a\in S(1)\), the operator \(\Op_h(a)\) acts continuously on \(\mathscr S(\R)\) and by duality on \(\mathscr S'(\R)\).

For \(A\in\mathrm{SL}(2,\R)\), let \(\mathcal M_A\) be the set of unitary operators \(M:L^2(\R)\to L^2(\R)\) satisfying the exact Egorov identity
\begin{equation}\label{eq:MA}
    M^{-1}\Op_h(a)M=\Op_h(a\circ A)
    \qquad\text{for all }a\in S(1).
\end{equation}
Such operators exist and are unique up to multiplication by a complex number of unit modulus.

We now pass from \(\R^2\) to the torus \(\bT^2\coloneqq \R^2/\Z^2\). We identify each \(a\in C^\infty(\bT^2)\) with its \(\Z^2\)-periodic lift to \(\R^2\). Since such a lift belongs to \(S(1)\), the operator \(\Op_h(a)\) is defined on \(L^2(\R)\).

For \(\omega=(y,\eta)\in\R^2\), define the\textit{ quantum translation} $U_\omega$
\begin{align*}
    U_\omega\coloneqq \Op_h(a_\omega),
    \qquad
    a_\omega(z)\coloneqq
    \exp\!\left(\frac{i}{h}\sigma(\omega,z)\right), \qquad \sigma(z,\omega)\coloneqq \xi y-x\eta, \qquad z=(x,\xi)\in \R^2.
\end{align*}
The operators \(U_\omega\) are unitary on \(L^2(\R)\). When \(h=(2\pi N)^{-1},\qquad N\in\N,\) the integer translations \(U_\omega\), \(\omega\in\Z^2\), commute with the quantizations of torus observables:
\begin{equation}\label{eq:commutation_relations}
    \Op_h(a)U_\omega
    =
    U_\omega\Op_h(a),
    \qquad
    a\in C^\infty(\bT^2),\quad \omega\in\Z^2.
\end{equation}
These commutation relations motivate a decomposition of $L^2(\R)$ into a direct integral of finite dimensional spaces $\ceH_N(\theta)$, where $\theta \in \bT^{2}$, such that $\Op_h(a)$ descends onto these spaces. To ensure the these spaces are nontrivial, for the rest of the paper, we assume
\begin{align*}
    h= (2\pi N)^{-1}\quad\text{where }N\in\mathbb N.
\end{align*}
For \(\theta\in\bT^2\), define the space of \textit{quantum states}
\begin{align}
    \ceH_N(\theta)
    \coloneqq
    \left\{
    f\in\mathscr S'(\R):
    U_\omega f
    =
    e^{2\pi i\sigma(\theta,\omega)+N\pi i Q(\omega)}f
    \text{ for all }\omega\in\Z^2
    \right\},
\end{align}
where \(Q(y,\eta)=y\eta\). The following lemma gives an explicit basis for $\ceH_N(\theta)$.

\begin{lemma}[{\cite[Lemma~2.5]{MR4703425}}]\label{lem:basis}
    The space \(\ceH_N(\theta)\) is \(N\)-dimensional. A basis is given by \(\{e_j^\theta\}_{j=0}^{N-1}\), where, for \(\theta=(\theta_x,\theta_\xi)\),
    \begin{align}
        e_j^\theta(x)
        \coloneqq
        \frac{1}{\sqrt N}
        \sum_{k\in\Z}
        e^{-2\pi i\theta_\xi k}
        \delta\!\left(
        x-\frac{Nk+j-\theta_x}{N}
        \right).
    \end{align}
\end{lemma}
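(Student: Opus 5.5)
The plan is to make the operators \(U_\omega\) completely explicit. Then I would reduce the infinitely many conditions defining \(\ceH_N(\theta)\) to the two generators \(\omega=(1,0)\) and \(\omega=(0,1)\), and solve those two conditions directly in \(\mathscr S'(\R)\).

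\textbf{Step 1: explicit form of \(U_\omega\).} For \(\omega=(y,\eta)\), the symbol \(a_\omega\) is the exponential of a linear function. Plugging it into \eqref{eq:weyl-quantization} and doing the \(\xi\)-integral (a delta function) gives
\begin{align*}
    U_\omega f(x)=e^{\frac{i}{h}(\pm\eta x\mp\frac12 y\eta)}f(x\mp y),
\end{align*}
that is, a phase times a translation times a modulation. The signs are fixed by the convention \(\sigma(z,\omega)=\xi y-x\eta\), and I would track them carefully. With \(h=(2\pi N)^{-1}\):
\begin{itemize}
    \item \(U_{(0,1)}\) is multiplication by \(e^{\pm 2\pi i N x}\);
    \item \(U_{(1,0)}\) is translation by \(\pm1\).
\end{itemize}
The same computation gives the Weyl composition law
\begin{align*}
    U_\omega U_{\omega'}=e^{\frac{i}{2h}\sigma(\omega,\omega')}U_{\omega+\omega'},
    \qquad e^{\frac{i}{2h}\sigma(\omega,\omega')}=e^{\pi i N\sigma(\omega,\omega')}\ \text{ for }\omega,\omega'\in\Z^2.
\end{align*}

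\textbf{Step 2: reduction to the generators.} Consider the character
\begin{align*}
    \chi(\omega)=e^{2\pi i\sigma(\theta,\omega)+N\pi iQ(\omega)}.
\end{align*}
I would check that \(\chi\) satisfies exactly the same twisted multiplicativity as \(U_\omega\). The \(\sigma(\theta,\cdot)\) part is additive. The \(Q\)-part satisfies
\begin{align*}
    Q(\omega+\omega')-Q(\omega)-Q(\omega')=y\eta'+y'\eta\equiv \sigma(\omega,\omega')\pmod 2
\end{align*}
for integer vectors, which matches the factor \(e^{\pi i N\sigma(\omega,\omega')}\). Hence if \(U_\omega f=\chi(\omega)f\) holds for \((1,0)\) and \((0,1)\), it holds for all \(\omega\in\Z^2\).

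\textbf{Step 3: solving the two conditions.} The \((0,1)\) condition reads \((e^{2\pi i(Nx+\theta_x)}-1)f=0\) after fixing signs. The smooth function \(g(x)=e^{2\pi i(Nx+\theta_x)}-1\) vanishes exactly on the discrete set \(x\in(\Z-\theta_x)/N\), and it has simple zeros there. So \(f\) is supported on this lattice, and locally \(f=\sum_{m\le M}c_{m}\delta^{(m)}\). Since \(g(x_0)=0\) and \(g'(x_0)\neq0\), one has \(g\,\delta^{(m)}\neq0\) for \(m\ge1\) (look at the top-order term). Hence only \(\delta\)'s survive, and
\begin{align*}
    f=\sum_{n\in\Z} c_n\,\delta\!\left(x-\tfrac{n-\theta_x}{N}\right).
\end{align*}
The \((1,0)\) condition shifts \(n\mapsto n+N\), so it becomes the quasi-periodicity
\begin{align*}
    c_{n+N}=e^{-2\pi i\theta_\xi}c_n.
\end{align*}
Writing \(n=Nk+j\) with \(0\le j<N\), this gives \(f=\sqrt N\sum_{j=0}^{N-1} c_j e_j^\theta\).

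\textbf{Step 4: the converse and independence.} Each \(e_j^\theta\) has bounded coefficients, so it is tempered and lies in \(\mathscr S'(\R)\). By the above, each \(e_j^\theta\) satisfies both generator conditions, hence lies in \(\ceH_N(\theta)\). The \(e_j^\theta\) have disjoint supports, so they are linearly independent, and \(\dim\ceH_N(\theta)=N\).

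I expect the main obstacle to be bookkeeping rather than ideas. The sign and phase conventions in Step 1 must be arranged so that the phases in Steps 2 and 3 come out exactly as \(e^{-2\pi i\theta_\xi k}\) and the shift \(-\theta_x\) in the stated formula for \(e_j^\theta\). The support argument for distributions annihilated by a function with simple zeros is the only genuinely analytic point.
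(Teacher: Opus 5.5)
Your proof is correct. The paper gives no proof of this lemma; it quotes it from \cite{MR4703425}, and your argument is the standard direct verification.

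The signs in Step 1 are the upper ones: $U_\omega f(x)=e^{2\pi i N(\eta x-\frac12 y\eta)}f(x-y)$. So $U_{(0,1)}$ is multiplication by $e^{2\pi i Nx}$ and $U_{(1,0)}f=f(\cdot-1)$. The two generator conditions are then exactly $(e^{2\pi i(Nx+\theta_x)}-1)f=0$ and $f(x-1)=e^{2\pi i\theta_\xi}f(x)$. These give the shift $-\theta_x$ and the phase $e^{-2\pi i\theta_\xi k}$, as you claim.

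Two small points should be tightened.

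In Step 2, closure under addition alone does not reach all of $\mathbb{Z}^2$ from $(1,0)$ and $(0,1)$. Apply the twisted law with $\omega'=-\omega$ to get $U_{-\omega}=U_\omega^{-1}$ and $\chi(-\omega)=\chi(\omega)^{-1}$. Then the set of $\omega$ for which $U_\omega f=\chi(\omega)f$ holds is a subgroup.

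In Step 3, knowing $g\,\delta^{(m)}_{x_0}\neq0$ for each $m\ge1$ separately does not rule out cancellation in $g\sum_{m\le M}c_m\delta^{(m)}_{x_0}$. The argument you want is that the $\delta^{(M-1)}_{x_0}$-coefficient of this product is $-M\,g'(x_0)\,c_M$. Only the $m=M$ term contributes there, because $g(x_0)=0$. Hence $c_M=0$ whenever $M\ge1$, and you induct downward.
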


We equip \(\ceH_N(\theta)\) with the inner product for which \(\{e_j^\theta\}_{j=0}^{N-1}\) is an orthonormal basis. Although the basis depends on the choice of representative of \(\theta_x\), the inner product depends only on \(\theta\in\bT^2\). If \(u=\sum_{j=0}^{N-1}\alpha_j e_j^\theta,\) we write
\begin{align}
    \|u\|_{\ell^p}
    \coloneqq
    \|(\alpha_0,\dots,\alpha_{N-1})\|_{\ell^p}.
\end{align}

For \(a\in C^\infty(\bT^2)\), define the finite-dimensional quantization
\begin{align*}
    \Op_{N,\theta}(a)
    \coloneqq
    \Op_h(a)|_{\ceH_N(\theta)}
    :
    \ceH_N(\theta)\to\ceH_N(\theta).
\end{align*}
The restriction is well-defined by \eqref{eq:commutation_relations}.

We now specialize to cat maps. Let
\begin{equation}\label{eq:A-matrix}
    A=
    \begin{pmatrix}
        a & b\\
        c & d
    \end{pmatrix}
    \in\mathrm{SL}(2,\Z),
\end{equation}
and choose \(M\in\mathcal M_A\). From the transformation rule for quantum translations,
\begin{align}
    M^{-1}U_\omega M=U_{A^{-1}\omega},
\end{align}
one verifies that
\begin{align}
    M(\ceH_N(\theta))
    \subset
    \ceH_N\!\left(A\theta+\frac{N\phi_A}{2}\right)
\end{align}
for a vector \(\phi_A\in\Z^2\) determined by \(A\). Under the parity assumptions \(ab,cd\in2\Z\) imposed in \eqref{eq:def-of-A}, one has \(\phi_A=0\). Hence \(M\) preserves \(\ceH_N(0)\), and we define
\begin{align}
    M_{N,0}
    \coloneqq
    M|_{\ceH_N(0)}
    :
    \ceH_N(0)\to\ceH_N(0).
\end{align}
The exact Egorov identity then becomes
\begin{align}\label{eq:finite-egorov}
    M_{N,0}^{-1}\Op_{N,0}(a)M_{N,0}
    =
    \Op_{N,0}(a\circ A),
    \qquad a\in C^\infty(\bT^2).
\end{align}

We shall use the following explicit formula for the matrix entries of \(M_{N,0}\), computed in \cite{kim-koirala-2023bounds}:
\begin{align}\label{eq:explicit-formula}
    \angles{M_{N,0}e_j^0,e_k^0}_{\ceH}
    =
    \frac{1}{\sqrt{N|b|}}
    \sum_{r=0}^{|b|-1}
    \exp\!\left[
    \frac{2\pi i}{b}
    \left(
        \frac{ar^2N}{2}
        +arj
        +\frac{aj^2}{2N}
        +\frac{dk^2}{2N}
        -kr
        -\frac{kj}{N}
    \right)
    \right].
\end{align}
When the space is clear from context, we write \(\angles{\cdot,\cdot}_{\ceH}=\angles{\cdot,\cdot}\).

Finally, we record the arithmetic notation associated with the short-period sequences. Fix \(A\) satisfying \eqref{eq:def-of-A}, and let \(\lambda>1\) be its expanding eigenvalue. Define integers \(p_r\) by
\begin{align}
    p_0=0,\qquad
    p_r=\frac{\lambda^r-\lambda^{-r}}{\lambda-\lambda^{-1}}
    \quad (r\ge1).
\end{align}
Then
\begin{align}\label{eq:ar-pr}
    A^r=p_rA-p_{r-1}I,
    \qquad r\ge1.
\end{align}
For \(q\ge1\), let
\begin{align*}
    N'_q
    \coloneqq
    \max\{N\in\N:A^q\equiv I\pmod N\},
\end{align*}
and, for \(N\in\N\), let
\begin{align*}
    T_N\coloneqq
    \min\{t\ge1:A^t\equiv I\pmod N\}.
\end{align*}
By \cite[Proposition~11]{MR1773813},
\begin{align}\label{eq:estimate-on-N}
    N'_{2k+1}=p_k+p_{k+1}\quad (k\ge0),
    \qquad
    N'_{2k}=2p_k\quad (k\ge1),
    \qquad
    T_{N'_q}=q.
\end{align}
The corresponding quantum period is
\begin{align*}
    n(N)
    \coloneqq
    \min\{t\ge1:M_{N,0}^t=e^{i\varphi}I
    \text{ for some }\varphi\in\R\}.
\end{align*}
If \(A_N\) is defined by \(A^{T_N}=I+NA_N,\) then \cite[(36)--(46)]{MR602111} implies that \(n(N)=T_N\) if \(N\) is odd, or if \(N\) is even and \((A_N)_{12}\) and \((A_N)_{21}\) are both even. Otherwise \(n(N)=2T_N\). Consequently,
\begin{align}\label{eq:periods}
    n(N'_{2k+1})=2k+1,
    \qquad
    n(N'_{2k})\in\{2k,4k\}.
\end{align}

\section{Preliminary lemmas}\label{sec:preliminary-lemma}

For \(q\ge 1\), write
\begin{align}
    A^q=
    \begin{pmatrix}
        a_q & b_q\\
        c_q & d_q
    \end{pmatrix}.
\end{align}
Thus \(b_q=bp_q\) and \(c_q=cp_q\), where \(p_q\) is defined in \eqref{eq:ar-pr}. For \(N\in\N\) and \(m=(m_1,m_2)\in\Z^2\), set
\begin{align}
    W_N(m)
    \coloneqq
    \Op_{N,0}\!\bigl(e^{2\pi i(m_1x+m_2\xi)}\bigr).
\end{align}

The following standard facts follow from the explicit formula for Heisenberg translations and from the exact Egorov identity.

\begin{lemma}\label{lem:W-basis}
    For every \(j\in\{0,\dots,N-1\}\),
    \begin{align}
        W_N(m)e_j^0=\gamma_{N,m,j}\,e_{j-m_2}^0
    \end{align}
    for some phase \(\gamma_{N,m,j}\in S^1\), where the index is taken modulo
    \(N\). Moreover, if \(B\coloneqq A^{\mathsf T}\), then
    \begin{align}
        M_{N,0}^{-1}W_N(m)M_{N,0}=W_N(Bm).
    \end{align}
\end{lemma}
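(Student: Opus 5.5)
The plan is to derive both claims from properties of the quantum translations $U_\omega$ on $L^2(\R)$ and then restrict to $\ceH_N(0)$.

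\textbf{First claim: action on the basis.} Write the symbol $e^{2\pi i(m_1x+m_2\xi)}$ as $a_\omega$ with $h=(2\pi N)^{-1}$. Since
\begin{align*}
    \frac{i}{h}\sigma(\omega,z)=2\pi i N(\xi y-x\eta),
\end{align*}
we need $Ny=m_2$ and $-N\eta=m_1$, that is, $\omega=(m_2/N,-m_1/N)$. It is standard, and follows directly from \eqref{eq:weyl-quantization}, that $U_\omega f(x)=e^{\frac ih(\eta x-\frac12 y\eta)}f(x-y)$ up to the sign convention for $\sigma$. Thus $U_\omega$ translates in position by $\pm m_2/N$ and multiplies by a unimodular phase $e^{2\pi i(\cdot)}$.

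Apply this to $e_j^0$, which is a sum of deltas at $x=(Nk+j)/N$. The support moves to $(Nk+j\mp m_2)/N$, and the phase becomes $e^{-2\pi i m_1 x}$ times a constant. On this support the factor $e^{-2\pi i m_1(Nk+j')/N}=e^{-2\pi i m_1 j'/N}$ does not depend on $k$. Hence $W_N(m)e_j^0$ is a unimodular multiple of $e_{j-m_2}^0$ (the sign is fixed by the convention that makes the index $j-m_2$). Because $\theta=0$, reindexing $k\mapsto k\pm1$ when $j-m_2$ leaves $\{0,\dots,N-1\}$ introduces no phase, so the index may be read modulo $N$. In particular $\gamma_{N,m,j}\in S^1$.

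\textbf{Second claim: conjugation by the cat map.} Use the exact Egorov identity \eqref{eq:finite-egorov} with $a(z)=e^{2\pi i m\cdot z}$. Then
\begin{align*}
    a(Az)=e^{2\pi i\, m\cdot Az}=e^{2\pi i\,(A^{\mathsf T}m)\cdot z},
\end{align*}
which gives $M_{N,0}^{-1}W_N(m)M_{N,0}=W_N(Bm)$ immediately.

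The only delicate point is bookkeeping: matching the sign conventions for $\sigma$ and checking that the phase is independent of $k$. Both are routine.
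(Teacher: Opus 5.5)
Your proof follows the route the paper indicates (the paper omits the proof and cites only the explicit formula for Heisenberg translations and the exact Egorov identity \eqref{eq:finite-egorov}), and the Egorov half is exactly right. One correction in the first half: under the paper's definition $\sigma(\omega,z)=\eta x-y\xi$, not $\xi y-x\eta$, so your $\omega=(m_2/N,-m_1/N)$ actually quantizes $e^{-2\pi i(m_1x+m_2\xi)}$ and would land on $e^0_{j+m_2}$; instead of appealing to a convention, compute directly from \eqref{eq:weyl-quantization}, where the $\xi$-integral yields $\delta(x-x'+m_2/N)$, which gives $W_N(m)f(x)=e^{2\pi i m_1x+\pi i m_1m_2/N}f(x+m_2/N)$ and hence $W_N(m)e_j^0=e^{\pi i m_1(2j-m_2)/N}e^0_{j-m_2}$.
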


The following two lemmata record half period structure of \(M_{N'_{2k},0}\). Since the proof of both lemmata are very similar, we only prove the second lemma.

The next two lemmas record the half-period structure of \(M_{N'_{2k},0}\). The proofs are elementary Gauss-sum computations from \eqref{eq:explicit-formula}. Since the two arguments are very similar, we give the proof of the second lemma.

\begin{lemma}\label{lem:even-half-period-2k}
    For every integer \(k\ge 1\) and every \(j\in\{0,\dots,N'_{2k}-1\}\),
    \begin{align}\label{eq:2k-branch}
        M_{N'_{2k},0}^{k} e_j^0
        =
        \alpha_{k,j}\,e_{a_kj}^0
        +
        \beta_{k,j}\,e_{a_kj+N'_{2k}/2}^0,
    \end{align}
    where all indices are taken modulo \(N'_{2k}\), and \(|\alpha_{k,j}|=|\beta_{k,j}|=\frac1{\sqrt2}.\)
\end{lemma}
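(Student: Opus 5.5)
The plan is to avoid evaluating a Gauss sum and instead read off the support of \(M_{N,0}^k e_j^0\), where \(N\coloneqq N'_{2k}=2p_k\), from the exact Egorov identity of Lemma~\ref{lem:W-basis}. By \eqref{eq:ar-pr}, \(A^k=p_kA-p_{k-1}I\), so
\begin{align*}
    b_k=bp_k=\tfrac{bN}{2},\qquad c_k=cp_k=\tfrac{cN}{2},\qquad a_k=ap_k-p_{k-1},\qquad d_k=dp_k-p_{k-1}.
\end{align*}
Iterating Lemma~\ref{lem:W-basis} gives \(M^kW_N(m)M^{-k}=W_N(B^{-k}m)\), with \(M=M_{N,0}\) and \(B=A^{\mathsf T}\). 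For \(m=(1,0)\) this becomes \(W_N(d_k,-bN/2)\).

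\textbf{Step 1 (two-point support).} The operator \(W_N(1,0)\) is diagonal: it multiplies \(e_j^0\) by a phase \(e^{2\pi i j/N}\), up to a convention-dependent normalization. Hence \(M^ke_j^0\) is an eigenvector of \(T\coloneqq W_N(d_k,-bN/2)\) with eigenvalue \(e^{2\pi ij/N}\). By Lemma~\ref{lem:W-basis}, \(T\) maps \(e_\ell^0\) to a phase times \(e^{0}_{\ell+bN/2}\).

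I first check that \(b\) is odd. If \(b\) were even, then \(\gcd(b,c)=1\) forces \(c\) odd, and \(cd\in2\Z\) forces \(d\) even. Since \(\tr A\) is even, \(a\) is also even. Then \(ad-bc\) is even, contradicting \(\det A=1\).

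So \(T\) swaps \(e_\ell^0\) and \(e^0_{\ell+N/2}\) up to phases, and \(T^2\) is diagonal. Using the Weyl-type composition law \(W_N(m)W_N(m')=e^{\pi i\sigma(\cdot,\cdot)/N}W_N(m+m')\), I will compute \(T^2\) as a phase times \(W_N(2d_k,-bN)\). This acts on \(e_\ell^0\) by \(\exp(2\pi i\cdot 2d_k\ell/N)\) times an \(\ell\)-independent phase \(c_T\).

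The eigen-equation then forces every \(\ell\) in the support of \(M^ke_j^0\) to satisfy \(2d_k\ell\equiv 2j+\kappa\pmod N\), for a fixed integer \(\kappa\) determined by \(c_T\). That is, \(d_k\ell\equiv j+\kappa/2\pmod{N/2}\). Since \(a_kd_k-b_kc_k=1\) and \(b_kc_k=bcN^2/4\equiv0\pmod{N/2}\), we have \(a_kd_k\equiv1\pmod{N/2}\). Hence \(\ell\equiv a_k(j+\kappa/2)\pmod{N/2}\), which leaves two candidate coordinates differing by \(N/2\).

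\textbf{Step 2 (the phase bookkeeping).} This is the step I expect to be the main obstacle: I must show \(\kappa\equiv0\), so that the two coordinates are exactly \(a_kj\) and \(a_kj+N/2\). This requires tracking the phase \(\gamma_{N,m,j}\) and the composition phase precisely, using the parity conditions \(ab,cd\in2\Z\) and the parities of \(p_k,p_{k-1}\). If this bookkeeping becomes messy, I would fall back on plugging \(j\) into the explicit kernel \eqref{eq:explicit-formula} for \(A^k\). This is legitimate because \(M^k\in\mathcal M_{A^k}\) up to a phase, and I would check that \(a_kb_k\) and \(c_kd_k\) are even. I would then verify directly that the resulting quadratic Gauss sum over \(r\) modulo \(|b_k|=|b|N/2\) vanishes unless \(\ell\equiv a_kj\pmod{N/2}\).

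\textbf{Step 3 (equal moduli).} Write \(M^ke_j^0=\alpha e^0_{\ell_0}+\beta e^0_{\ell_0+N/2}\). The equation \(T(M^ke_j^0)=e^{2\pi ij/N}M^ke_j^0\) reads \(\gamma\beta=e^{2\pi ij/N}\alpha\) for some \(|\gamma|=1\), because \(T\) exchanges the two basis vectors with unimodular phases. Hence \(|\alpha|=|\beta|\). Unitarity of \(M\) gives \(|\alpha|^2+|\beta|^2=1\), so \(|\alpha|=|\beta|=1/\sqrt2\).
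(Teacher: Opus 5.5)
Your argument is correct, and it takes a genuinely different route from the paper.

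\textbf{The paper's route.} The paper writes out only the $4k$ case and says this lemma is analogous. It works from the explicit kernel \eqref{eq:explicit-formula} applied to $A^k$, where $|b_k|=|b|N/2$. After splitting the summation index, a geometric sum of length $N/2$ kills every $\ell\not\equiv a_kj\pmod{N/2}$. The moduli $1/\sqrt2$ then come from evaluating the remaining quadratic Gauss sum modulo the odd number $|b|$.

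\textbf{Your route.} You conjugate the diagonal operator $W_N(1,0)$ by $M^k$ and use only Lemma~\ref{lem:W-basis}, Egorov and unitarity. The equality $|\alpha_{k,j}|=|\beta_{k,j}|$ comes for free, because $T$ swaps $e^0_\ell$ and $e^0_{\ell+N/2}$; no Gauss sum has to be evaluated.

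\textbf{Your Step 2 is one line.} It needs nothing beyond $b$ odd. From \eqref{eq:weyl-quantization} one gets $W_N(m)e^0_\ell=e^{2\pi i m_1(\ell-m_2/2)/N}e^0_{\ell-m_2}$, so in particular $W_N(1,0)e_j^0=e^{2\pi ij/N}e_j^0$ with no normalization ambiguity. Since $\sigma(m,m)=0$, we have $T^2=W_N(2d_k,-bN)$ with no extra phase. Hence
\[
T^2e^0_\ell=e^{4\pi i d_k\ell/N}\,e^{2\pi i b d_k}\,e^0_\ell=e^{4\pi i d_k\ell/N}\,e^0_\ell ,
\]
so $\kappa=0$. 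The parity conditions $ab,cd\in2\mathbb Z$ and the parities of $p_k,p_{k-1}$ play no role.

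\textbf{Trade-off.} The paper's computation gives the phases explicitly and runs parallel to its $4k$ proof. Yours is shorter, avoids the kernel for $A^k$ entirely, and also yields Lemma~\ref{lem:even-half-period-4k} at no cost. There $B^{-2k}(1,0)=(d_{2k},-bp_{2k})$ with $N\mid p_{2k}$. Since $d_{2k}\equiv1\pmod N$ is odd and $b$ is odd, $M^{2k}W_N(1,0)M^{-2k}$ is the diagonal operator $(-1)^{p_{2k}/N}\,\mathrm{diag}(e^{2\pi i\ell/N})$. The sign is $-1$ exactly when $n(N)=4k$, which forces $M^{2k}e_j^0\in\mathbb C\,e^0_{j+N/2}$.
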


\begin{lemma}\label{lem:even-half-period-4k}
    Suppose \(n(N'_{2k})=4k\). Then for every \(j\in\{0,\dots,N'_{2k}-1\}\),
    \begin{align}\label{eq:4k-branch}
        M_{N'_{2k},0}^{2k}e_j^0
        =
        \eta_{k,j}\,e_{j+N'_{2k}/2}^0
    \end{align}
    for some phase \(\eta_{k,j}\in S^1\).
\end{lemma}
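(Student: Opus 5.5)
The plan is to identify $M^{2k}\coloneqq M_{N,0}^{2k}$, with $N=N'_{2k}=2p_k$, as a scalar multiple of a single Heisenberg translation $W_N(m_0)$ with $m_0\in\frac N2\Z^2$. We then read off $m_0$ from the parity data that forces $n(N)=4k$. The explicit Gauss-sum formula \eqref{eq:explicit-formula} is kept as a fallback.

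\emph{Step 1: $M^{2k}$ twists each $W_N(m)$ by a sign.} By \eqref{eq:estimate-on-N}, $T_N=2k$, so $A^{2k}=I+NA_N$ and $B^{2k}=I+NA_N^{\mathsf T}$ with $B=A^{\mathsf T}$. Lemma~\ref{lem:W-basis} gives
\[
M^{-2k}W_N(m)M^{2k}=W_N(m+NA_N^{\mathsf T}m).
\]
From the definition of $\ceH_N(0)$ with $\theta=0$, and from the composition law $U_\omega U_{\omega'}=e^{\pi i N\sigma(\omega,\omega')}U_{\omega+\omega'}$, a translation by $Nn$ with $n\in\Z^2$ acts on $\ceH_N(0)$ by the sign $(-1)^{N n_1n_2+N(\ldots)}$. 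So $W_N(m+Nn)=\epsilon(m,n)W_N(m)$ with $\epsilon\in\{\pm1\}$ explicit and bilinear in $(m,n)$ modulo $2$. Hence
\[
M^{-2k}W_N(m)M^{2k}=\chi(m)\,W_N(m),
\]
where $\chi:\Z^2\to\{\pm1\}$ is a character depending only on $A_N$ modulo $2$. Concretely, I expect $\chi(m)=(-1)^{(A_N)_{21}m_1^2+(A_N)_{12}m_2^2+\cdots}$, which is a homomorphism modulo $2$ since $m_i^2\equiv m_i$.

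\emph{Step 2: Schur's lemma.} The operators $W_N(m)$, $m\in\Z^2$, act irreducibly on $\ceH_N(0)$. The commutation relation $W_N(m_0)^{-1}W_N(m)W_N(m_0)=e^{2\pi i\sigma(m_0,m)/N}W_N(m)$ shows that conjugation by $W_N(m_0)$ realizes every character of $(\Z/N)^2$. Choose $m_0$ so that this character equals $\chi$; since $\chi$ takes values in $\{\pm1\}$, we get $m_0\in\frac N2\Z^2$ modulo $N$. Then $W_N(m_0)^{-1}M^{2k}$ commutes with all $W_N(m)$, so by Schur it is a scalar:
\[
M^{2k}=c\,W_N(m_0),\qquad m_0\in\{(0,0),(\tfrac N2,0),(0,\tfrac N2),(\tfrac N2,\tfrac N2)\}.
\]
The hypothesis $n(N)=4k$ rules out $m_0=0$, since then $M^{2k}$ would be scalar and $n(N)\le 2k$.

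\emph{Step 3: pinning down $m_0$.} This is the main obstacle. By Lemma~\ref{lem:W-basis}, the claim \eqref{eq:4k-branch} is equivalent to $(m_0)_2=N/2$, that is, to excluding $m_0=(N/2,0)$. The case $m_0=(N/2,0)$ corresponds to $\chi(m)$ depending only on $m_2$, and $m_0=(0,N/2)$ or $(N/2,N/2)$ to $\chi$ depending on $m_1$. Since $\sigma$ pairs the first component of $m_0$ with $m_2$ and the second with $m_1$, I must show that $\chi(1,0)=-1$, i.e.\ that $\chi$ is nontrivial on $e_1$. I would compute this from the parity of $(A_N)_{21}$ or $(A_N)_{12}$, using the identities
\[
b_{2k}=bp_{2k},\qquad c_{2k}=cp_{2k},\qquad p_{2k}=p_k(p_{k+1}-p_{k-1}),
\]
which give $(A_N)_{12}=b(p_{k+1}-p_{k-1})/2$ and $(A_N)_{21}=c(p_{k+1}-p_{k-1})/2$. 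Combined with $\gcd(b,c)=1$ and the criterion of \cite{MR602111} that $n=4k$ exactly when one of these is odd, this should determine which of $b$, $c$ is odd and hence which component of $\chi$ is nontrivial.

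If the sign bookkeeping becomes ambiguous, I would fall back on a direct check. Using \eqref{eq:explicit-formula} with $A$ replaced by $A^{2k}$, where $b_{2k}$ is a multiple of $N/2$, the sum over $r$ collapses. One then verifies directly that $|\langle M^{2k}e_j^0,e_{j+N/2}^0\rangle|=1$ for a single $j$. That suffices, because Step 2 already shows that $M^{2k}$ maps each $e_j^0$ to a multiple of one basis vector with a $j$-independent shift.
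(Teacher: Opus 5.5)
\textbf{There is a gap in Step 3, which is where all the content of the lemma lies.} Your Steps 1 and 2 are sound, and they give a genuinely different, more structural route than the paper's entry-by-entry Gauss-sum computation. Irreducibility and Schur do give $M^{2k}=c\,W_N(m_0)$ with $m_0\in\frac N2\Z^2\setminus\{0\}$ modulo $N$. The character property of $\chi$ is automatic, since conjugation by $M^{2k}$ preserves the composition law, so you do not need $m_i^2\equiv m_i$. You also correctly reduce the lemma to showing $\chi(1,0)=-1$.

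Your plan for that last step does not work as stated. The Hannay--Berry criterion together with $\gcd(b,c)=1$ cannot determine the parity of $b$. That information alone does not exclude $(A_N)_{12}$ even and $(A_N)_{21}$ odd. In that case your own Step 2 would give $\chi(m)=(-1)^{m_2}$ and $m_0=(N/2,0)$, so $M^{2k}$ would be diagonal in the standard basis and the conclusion would fail. A genuinely new input is needed.

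\textbf{The missing idea} is that the full hypotheses \eqref{eq:def-of-A} force $b$ and $c$ to be odd (and $a,d$ even). This is the first step of the paper's proof. Suppose $b$ were even. Then $\gcd(b,c)=1$ makes $c$ odd, $cd\in2\Z$ makes $d$ even, and $\tr A$ even makes $a$ even. Then $\det A$ would be even, a contradiction. The same argument applies to $c$.

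\textbf{How Step 3 then closes.} The entries $(A_N)_{12}=b\tr(A^k)/2$ and $(A_N)_{21}=c\tr(A^k)/2$ share the factor $\tr(A^k)/2$. So $n(N)=4k$ forces $\tr(A^k)/2$ to be odd, and both entries are odd.

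Your guessed formula for $\chi$ also needs correcting. With your composition law, for even $N$ one gets $W_N(m+Nn)=(-1)^{m_1n_2+m_2n_1}W_N(m)$. Taking $n=A_N^{\mathsf T}m$ gives
$\chi(m)=(-1)^{(A_N)_{12}m_1+(A_N)_{21}m_2}$.
The cross term drops out because its coefficient is $\tr A_N=-N\det A_N$, which is even. Your guess has the two entries transposed. That is harmless here only because both entries are odd. Hence $\chi(m)=(-1)^{m_1+m_2}$, $m_0=(N/2,N/2)$, and $M^{2k}=c\,W_N(N/2,N/2)$ shifts every index by $N/2$.

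\textbf{Your fallback does not avoid the issue.} In the paper's computation, the entry at $\ell=a_{2k}j$ vanishes by pairing $u$ with $u+R$. That pairing uses that $a_{2k}$ and $R=|b|\tr(A^k)/2$ are odd, which again requires $b$ odd.

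\textbf{Comparison of routes, once yours is completed.} Your route yields a cleaner structural statement: $M^{2k}$ is a scalar multiple of a single Heisenberg translation, which explains why the shift is independent of $j$. The paper's route is more hands-on, evaluating the Gauss sum directly for each $j$.
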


\begin{proof}
    For notational simplicity, write \(N_k\coloneqq N'_{2k}=2p_k,\) and \(M_k\coloneqq M_{N_k,0}.\) 

    We first note that \(b\) and \(c\) are odd, while \(a\) and \(d\) are even. Indeed, if \(b\) were even, then \(\gcd(b,c)=1\) would force \(c\) odd, and \(cd\in 2\mathbb Z\) would imply \(d\) even. Since \(\tr(A)\) is even, \(a\) would also be even, and then \(\det(A)=ad-bc\) would be even, a contradiction. Thus \(b,c\) are odd and \(a,d\) are even.
    \begin{claim}\label{claim:oddity}
        The integers 
        \(R\coloneqq |b|\frac{\tr(A^k)}{2}\) and $a_{2k}$ are odd.
    \end{claim}

    \begin{proof}[Proof of Claim \ref{claim:oddity}]
        By the discussion preceding \eqref{eq:periods}, \(N_k=2p_k\) and the fact that \(b,c\) are odd, it follows that
        \begin{align}
            m_k\coloneqq \frac{p_k\tr(A^k)}{2p_k} =\frac{p_{2k}}{N_k}
        \end{align}
        is odd. Also, \(a_{2k}=ap_{2k}-p_{2k-1}=am_kN_k-p_{2k-1}.\) Because \(a\) is even and \(p_{2k-1}\) is odd, it follows that \(a_{2k}\) is odd.
    \end{proof}

    \begin{claim}\label{claim:at-most-two-support}
        The support of \(M_k^{2k}e_j^0\) is contained in the coordinates corresponding to \(\ell_0\coloneqq a_{2k}j\) and \(\ell_1\coloneqq a_{2k}j+\frac{N_k}{2}.\)
    \end{claim}

    \begin{proof}[Proof of Claim \ref{claim:at-most-two-support}]
        After absorbing irrelevant phases, we may use \eqref{eq:explicit-formula} to obtain
        \begin{align}
            \langle M_k^{2k}e_j^0,e_\ell^0\rangle =
            \frac{e^{i\theta_{j,\ell}}}{\sqrt{N_k|b_{2k}|}}
            \sum_{q=0}^{|b_{2k}|-1}
            \exp\!\left(
            \frac{2\pi i}{b_{2k}}
            \Bigl(
            \frac{a_{2k}N_k}{2}q^2-(\ell-a_{2k}j)q
            \Bigr)\right).
        \end{align} 
        Let \(\varepsilon\coloneqq \operatorname{sgn}(b),\) so that \(b_{2k}=\varepsilon RN_k\). Substituting
        \begin{align*}
            q=u+2Rv, \qquad u\in\{0,\dots,2R-1\}, \qquad v\in\{0,\dots,N_k/2-1\}.
        \end{align*}
        gives
        \begin{align*}
            \langle M_k^{2k}e_j^0,e_\ell^0\rangle
            =
            \frac{e^{i\theta'_{j,\ell}}}{N_k\sqrt R}
            \sum_{u=0}^{2R-1}
            \exp\!\left(
            2\pi i\Bigl(
            \varepsilon\frac{a_{2k}u^2}{2R}
            -\varepsilon\frac{(\ell-a_{2k}j)u}{RN_k}
            \Bigr)\right)
            \sum_{v=0}^{N_k/2-1}
            \exp\!\left(
            -2\pi i\,2\varepsilon\frac{\ell-a_{2k}j}{N_k}v
            \right).
        \end{align*}
        The inner geometric sum vanishes unless \(\ell-a_{2k}j\equiv 0\pmod{N_k/2},\) which implies the claim.
    \end{proof}

    \begin{claim}\label{claim:l-vanishes}
        We have \(\angles{M_k^{2k}e_j^0, e_{\ell_0}^0}=0\).
    \end{claim}

    \begin{proof}[Proof of Claim \ref{claim:l-vanishes}]
        Since \(A^{2k}\equiv I\pmod{N_k}\), we have \(a_{2k}\equiv 1\pmod{N_k}\), so
        \begin{align*}
                \ell_0\equiv j\pmod{N_k},
            \qquad
            \ell_1\equiv j+\frac{N_k}{2}\pmod{N_k}.
        \end{align*} 
        Now set \(\ell=\ell_\delta\coloneqq a_{2k}j+\delta N_k/2\), \(\delta\in\{0,1\}\). Then
        \begin{align*}
            \langle M_k^{2k}e_j^0,e_{\ell_\delta}^0\rangle
            =
            \frac{e^{i\vartheta_{j,\delta}}}{2\sqrt R}
            \sum_{u=0}^{2R-1}
            \exp\!\left(
            2\pi i\Bigl(
            \varepsilon\frac{a_{2k}u^2}{2R}
            -\varepsilon\frac{\delta u}{2R}
            \Bigr)\right).
        \end{align*}
        Define
        \begin{align*}
            F_\delta(u)\coloneqq 
            \exp\!\left(
            2\pi i\Bigl(
            \varepsilon\frac{a_{2k}u^2}{2R}
            -\varepsilon\frac{\delta u}{2R}
            \Bigr)\right).
        \end{align*}
        Pair the terms \(u\) and \(u+R\). Since \(a_{2k}\) and \(R\) are odd,
        \begin{align*}
            \frac{F_\delta(u+R)}{F_\delta(u)}
            =
            \exp\!\left(
            2\pi i\varepsilon
            \Bigl(
            a_{2k}u+\frac{a_{2k}R}{2}-\frac{\delta}{2}
            \Bigr)\right)
            =
            (-1)^{1-\delta}.
        \end{align*}
        Hence \(F_0(u+R)=-F_0(u)\) and \(F_1(u+R)=F_1(u).\) Therefore the sum for \(\delta=0\) vanishes which implies the claim.
    \end{proof}
    Thus, \(M_k^{2k}e_j^0\) is supported on at most one basis vector, namely \(e_{j+N_k/2}^0\). Since \(M_k^{2k}\) is unitary,  there exists a phase \(\eta_{k,j}\in S^1\) such that \eqref{eq:4k-branch} holds.
\end{proof}

The next lemma is the higher-power version of the dispersive bound proved in \cite[Proposition~8]{kim-koirala-2023bounds}. Related matrix-entry calculations for short quantum periods also appear in \cite[(4.8) Section~4.1]{SchwartzThesis}. The proof is the same Gauss-sum computation as in \cite[Proposition~8]{kim-koirala-2023bounds}, applied to \(A^r\) in place of \(A\), so we omit it.

\begin{lemma}\label{lem:gauss}
    Let $N\in \mathbb N$ and $r\in \mathbb Z\setminus\{0\}$. Then for every $m\in \mathbb Z^2$ and every $j,\ell\in\{0,\dots,N-1\}$,
    \begin{align}
        \bigl|\angles{W_N(m)M_{N,0}^{\,r}e_j^0, e_\ell^0}\bigr| \le \frac{\sqrt{\gcd(N,b_{|r|})}}{\sqrt N}.
    \end{align}
\end{lemma}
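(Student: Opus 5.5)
Since $W_N(m)$ is a phase times a coordinate shift by Lemma~\ref{lem:W-basis}, we have $|\langle W_N(m)M_{N,0}^r e_j^0,e_\ell^0\rangle| = |\langle M_{N,0}^r e_j^0, e_{\ell+m_2}^0\rangle|$. It therefore suffices to prove the bound for the matrix entries of $M_{N,0}^r$ alone, uniformly in the indices. We may also take $r>0$: for $r<0$ we have $M_{N,0}^{r}=(M_{N,0}^{|r|})^*$ by unitarity, so $|\langle M^{r}e_j^0,e_\ell^0\rangle|=|\langle M^{|r|}e_\ell^0,e_j^0\rangle|$ and the claimed bound is symmetric in $j,\ell$.

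For $r>0$, the operator $M_{N,0}^r$ is, up to a unimodular constant, the quantization of $A^r=\begin{pmatrix}a_r&b_r\\c_r&d_r\end{pmatrix}$. This matrix still satisfies the parity conditions $a_rb_r,c_rd_r\in2\Z$, which we check using $A^r=p_rA-p_{r-1}I$ and the parities of $a,b,c,d$. Hence formula \eqref{eq:explicit-formula} applies with $(a,b,d)$ replaced by $(a_r,b_r,d_r)$, exactly as it was used for $A^{2k}$ in the proof of Lemma~\ref{lem:even-half-period-4k}. Assuming $b_r\neq 0$ (true since $p_r\neq0$ for $r\ge1$), this gives, up to a phase,
\begin{align*}
\langle M_{N,0}^r e_j^0,e_\ell^0\rangle=\frac{e^{i\theta}}{\sqrt{N|b_r|}}\sum_{q=0}^{|b_r|-1}\exp\Bigl(\frac{2\pi i}{b_r}\Bigl(\frac{a_rN}{2}q^2+(a_rj-\ell)q\Bigr)\Bigr).
\end{align*}

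The main step, and the only real obstacle, is to bound this quadratic Gauss sum by $\sqrt{|b_r|\gcd(N,b_r)}$; dividing by $\sqrt{N|b_r|}$ then gives the claim. I would square the modulus and use Weyl differencing, substituting $q=q'+h$. The inner sum over $q'$ becomes a linear exponential sum
\[
\sum_{q'}e\bigl(a_rNhq'/b_r\bigr),
\]
which vanishes unless $b_r\mid a_rNh$. Since $\gcd(a_r,b_r)=1$ (as $\det A^r=1$), this condition reduces to $b_r\mid Nh$, i.e.\ $\frac{b_r}{\gcd(N,b_r)}\mid h$. There are therefore $\gcd(N,b_r)$ admissible values of $h$ modulo $b_r$, each contributing at most $|b_r|$, so the squared sum is at most $|b_r|\gcd(N,b_r)$.

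The delicate point is the half-integer coefficient $a_rN/2$. When $a_rN$ is odd, the phase $\frac{a_rN}{2b_r}q^2$ is only well defined modulo $1$ for $q$ taken modulo $2b_r$. I would handle this as in \cite[Proposition~8]{kim-koirala-2023bounds}: the parity condition forces $b_r$ to be even in that case, and the periodicity of the summand over the range $\{0,\dots,|b_r|-1\}$ is then verified directly from the parity of $a_r$ and $b_r$. This ensures the differencing argument runs over a complete residue system. Alternatively, one can perform the differencing over the doubled range $\{0,\dots,2|b_r|-1\}$ and divide by $2$ at the end, which loses nothing.
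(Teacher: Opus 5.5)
Your proposal is correct and follows the route the paper indicates; the paper omits the proof and defers to the Gauss-sum computation of \cite[Proposition~8]{kim-koirala-2023bounds} applied to $A^r$. That route is exactly yours: reduce to matrix entries of $M_{N,0}^{|r|}$, apply \eqref{eq:explicit-formula} to $A^r$ after checking $a_rb_r,c_rd_r\in 2\Z$, and bound the quadratic Gauss sum by $\sqrt{|b_r|\gcd(N,b_r)}$ via Weyl differencing and $\gcd(a_r,b_r)=1$. One minor remark: the summand is $|b_r|$-periodic precisely because $a_rNb_r/2\in\Z$, which is what your primary argument uses; your doubled-range alternative relies on this same periodicity to identify the doubled sum with twice the original, so it is not an independent fix.
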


The next lemma is a slightly sharper form of a divisibility estimate related to \cite[Lemma~4.1.3]{SchwartzThesis}, where the author proves $\gcd(N'_T,b_r)= O(\sqrt{N'_{T}})$ via a different method.

\begin{lemma}\label{lem:gcd}
    For any integer $T\ge 1$ and $1\leq r<T$,
    \begin{align}
        \gcd(N_T',b_r)\le |b|\,N'_{\gcd(T,2r)}.
    \end{align}
\end{lemma}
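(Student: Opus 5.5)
The idea is to strip off the factor \(b\), show that \(A^{2r}\) is the identity modulo \(d\coloneqq\gcd(N'_T,p_r)\), and then use the maximality in the definition of \(N'_q\).

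\emph{Removing the factor \(b\).} Since \(b_r=bp_r\) and \(\gcd(N,xy)\) divides \(\gcd(N,x)\gcd(N,y)\), we get
\begin{align*}
    \gcd(N'_T,b_r)\le \gcd(N'_T,b)\,\gcd(N'_T,p_r)\le |b|\,d .
\end{align*}
So it suffices to prove \(d\le N'_{\gcd(T,2r)}\).

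\emph{Congruences modulo \(d\).} We work modulo \(d\) and use two congruences.
\begin{itemize}
    \item Since \(d\mid N'_T\) and \(A^T\equiv I\pmod{N'_T}\), we have \(A^T\equiv I\pmod d\).
    \item Since \(r\ge 1\), the identity \eqref{eq:ar-pr} gives \(A^r=p_rA-p_{r-1}I\). As \(d\mid p_r\), this yields \(A^r\equiv s I\pmod d\) with \(s\coloneqq -p_{r-1}\). So \(A^r\) is scalar modulo \(d\).
\end{itemize}
Taking determinants, \(1=\det A^r\equiv s^2\pmod d\). Therefore \(A^{2r}\equiv s^2I\equiv I\pmod d\).

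\emph{Passing to the gcd of the exponents.} The set \(\{t\in\Z: A^t\equiv I \pmod d\}\) is a subgroup of \(\Z\); this makes sense because \(A\) is invertible over \(\Z\). It contains both \(T\) and \(2r\), so it contains \(q\coloneqq\gcd(T,2r)\ge 1\). Thus \(A^{q}\equiv I\pmod d\). By the definition of \(N'_q\) as the largest such modulus, \(d\le N'_q\). This concludes the argument.

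There is no serious obstacle. The one step needing care is the determinant observation: being scalar modulo \(d\) gives only \(s^2\equiv1\), not \(s\equiv 1\). This is exactly why the bound involves \(\gcd(T,2r)\) rather than \(\gcd(T,r)\).
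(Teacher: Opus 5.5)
Your proof is correct and follows the paper's argument essentially step for step: reduce to \(d=\gcd(N'_T,p_r)\), use \(A^r\equiv -p_{r-1}I \pmod d\) and the determinant to get \(A^{2r}\equiv I\pmod d\), pass to the exponent \(\gcd(T,2r)\), and invoke maximality of \(N'_{\gcd(T,2r)}\). The only differences are minor. You justify the reduction from \(b_r\) to \(p_r\) explicitly via \(\gcd(N,xy)\mid\gcd(N,x)\gcd(N,y)\), which the paper leaves implicit. You also conclude \(d\le N'_{\gcd(T,2r)}\) directly from maximality, whereas the paper uses an lcm argument to obtain the stronger divisibility \(d\mid N'_{\gcd(T,2r)}\).
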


\begin{proof}
    Since $b_r=bp_r$, it suffices to prove that
    \begin{align}
        \gcd(N'_T,p_r)\mid N'_{\gcd(T,2r)}.\label{eq:gcd-lucas}
    \end{align}
    
    Since \(\gcd(N'_T,p_r)\mid N'_T\), we have
    \begin{align}
        A^T\equiv I\pmod{\gcd(N'_T,p_r)}.
    \end{align}
    Since \(\gcd(N'_T,p_r)\mid p_r\), the identity $A^r=p_rA-p_{r-1}I$ implies $A^r\equiv -p_{r-1}I\pmod{\gcd(N'_T,p_r)}.$ Taking determinants yields $p_{r-1}^2\equiv 1\pmod{\gcd(N'_T,p_r)},$ and hence
    \begin{align}
        A^{2r}\equiv I\pmod{\gcd(N'_T,p_r)}.
    \end{align}
    Therefore the order of \(A\) modulo \(\gcd(N'_T,p_r)\) divides both \(T\) and \(2r\), so it divides $\gcd(T,2r)$. Thus
    \begin{align}
        A^{\gcd(T,2r)}\equiv I\pmod{\gcd(N'_T,p_r)}.
    \end{align} 
    By definition, $A^{\gcd(T,2r)}\equiv I\pmod{N'_{\gcd(T,2r)}}.$ Hence
    \begin{align}
        A^{\gcd(T,2r)}\equiv I \pmod{\operatorname{lcm}(\gcd(N'_T,p_r),\,N'_{\gcd(T,2r)})}.
    \end{align}
    Since \(N'_{\gcd(T,2r)}\) is the largest positive integer with this property, it follows that
    \begin{align}
        \operatorname{lcm}(\gcd(N'_T,p_r),\,N'_{\gcd(T,2r)})=N'_{\gcd(T,2r)},
    \end{align}
    and therefore $\gcd(N'_T,p_r)\mid N'_{\gcd(T,2r)}$ which is \eqref{eq:gcd-lucas}.
\end{proof}

We conclude the preparatory section with a quantitative estimate on the number of times a nonzero Fourier mode can have vanishing second component modulo \(N\) along the orbit of \(A^{\mathsf T}\).

\begin{lemma}\label{lem:resonance}
    There exists a constant \(C_A>0\), depending only on \(A\), such that for every \(m\in\Z^2\setminus\{0\}\), every \(c\in\Z/N\Z\), and all sufficiently large integers \(T\), with \(N\coloneqq N'_T\),
    \begin{align}
        \#\Bigl\{
        0\le s<T:
        e_2\cdot (A^{\mathsf T})^s m\equiv c\pmod N
        \Bigr\}
        \le
        C_A\bigl(1+\log(1+\|m\|)\bigr).
        \label{eq:resonance-bound}
    \end{align}
\end{lemma}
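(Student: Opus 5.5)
The plan is to exploit the fact that $s\mapsto e_2\cdot B^s m$, with $B\coloneqq A^{\mathsf T}$, is an integer sequence with an explicit exponential formula. We then reduce the congruence modulo $N$ to an honest equality of integers on all but a logarithmically short set of times.

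\emph{Step 1: periodicity and a symmetric window.} Since $N=N'_T$, we have $A^T\equiv I\pmod N$, hence $B^T\equiv I\pmod N$. So $x_s\coloneqq e_2\cdot B^s m \bmod N$ is $T$-periodic in $s$. Counting over $0\le s<T$ is therefore the same as counting over the window $-T/2< s\le T/2$. By \eqref{eq:estimate-on-N} and $p_r\asymp\lambda^r$, we have $N=N'_T\ge c_1\lambda^{T/2}$ for some $c_1=c_1(A)>0$ and all $T\ge1$. In the odd case $N'_{2k+1}=p_k+p_{k+1}\asymp\lambda^{k+1/2}$, and in the even case $N'_{2k}=2p_k\asymp\lambda^{k}$.

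\emph{Step 2: the explicit formula.} Diagonalize $B$ over $\R$ with eigenvalues $\lambda^{\pm1}$. This gives $x_s=\alpha\lambda^s+\beta\lambda^{-s}$ for all $s\in\Z$, where $\alpha,\beta$ are real and linear in $m$, with $|\alpha|+|\beta|\le C_2\|m\|$. Since $m\neq0$ and $B$ is invertible, $(\alpha,\beta)\ne(0,0)$. Consequently $|x_s|\le C_2\|m\|\lambda^{|s|}$. Define
\begin{align*}
    L\coloneqq \log_\lambda\!\Bigl(\frac{4C_2\|m\|}{c_1}\Bigr)^{+}+1 .
\end{align*}
Then for $|s|\le T/2-L$ we get $|x_s|<N/2$.

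\emph{Step 3: counting.} Fix $c$ and let $\tilde c\in(-N/2,N/2]$ be its representative. For $|s|\le T/2-L$, the congruence $x_s\equiv c\pmod N$ together with $|x_s|<N/2$ forces the integer equality $x_s=\tilde c$. The real function $f(t)=\alpha\lambda^t+\beta\lambda^{-t}-\tilde c$ is not identically zero. To see this, note that if $\tilde c\neq 0$ it is not constant unless $\alpha=\beta=0$, and if $\tilde c=0$ then $(\alpha,\beta)\neq(0,0)$ suffices. Substituting $y=\lambda^t>0$ turns $f(t)=0$ into the quadratic $\alpha y^2-\tilde c y+\beta=0$, which has at most $2$ roots. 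So at most $2$ integer times in this range are resonant. The remaining times satisfy $T/2-L<|s|\le T/2$, and there are at most $2L+2$ of them; we count these trivially. This gives the bound $2+2L+2\le C_A(1+\log(1+\|m\|))$. The phrase ``$T$ sufficiently large'' is only needed so that the window is nonempty, and the bound is trivial otherwise.

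\emph{Main obstacle.} The only delicate point is the lower bound $N'_T\gtrsim\lambda^{T/2}$ uniformly in the parity of $T$, combined with the matching growth bound on $x_s$. These must line up so that the ``bad'' boundary layer has length only $O(1+\log\|m\|)$ rather than a fraction of $T$. I would verify this carefully from \eqref{eq:estimate-on-N} and the closed form for $p_r$. Everything else is the elementary observation that a nontrivial two-term exponential sum takes any given value at most twice.
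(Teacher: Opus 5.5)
Your proposal is correct and follows essentially the same route as the paper: the two-exponential formula $w_s=\alpha\lambda^s+\beta\lambda^{-s}$, reduction of the congruence to the exact equality $w_s=\tilde c$ outside a boundary layer of width $O(1+\log(1+\|m\|))$ around $|s|=T/2$, and the root count for $\alpha y^2-\tilde c\,y+\beta$. Your use of $T$-periodicity to pass to the symmetric window $-T/2<s\le T/2$ is only a repackaging of the paper's substitution $s=T-r$.

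One small correction: $(\alpha,\beta)\neq(0,0)$ does not follow from $m\neq0$ and invertibility of $B$ alone, since that only gives $B^sm\neq0$, not a nonzero second component. The right reason is that $w_0=m_2$ and $w_1=bm_1+dm_2$ cannot both vanish, because $b\neq0$; the paper checks exactly this.
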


\begin{proof}
    Set \(B\coloneqq A^{\mathsf T}\) and
    \begin{align*}
        w_s\coloneqq e_2\cdot B^s m,\qquad s\in\Z.
    \end{align*}
    The sequence \(w_s\) is not identically zero. Indeed, if \(w_s=0\) for all \(s\), then \(w_0=m_2=0\), while
    \begin{align*}
        w_1=e_2\cdot Bm=bm_1+dm_2=bm_1.
    \end{align*}
    Since \(b\neq0\), this would force \(m_1=0\), contradicting \(m\neq0\).

    \begin{claim}\label{claim-ws-estimate}
        There exists a constant $C_1>0$ such that
        \begin{align}
            |w_s| \le C_1\|m\|\lambda^{|s|}
            \qquad\text{for all } s\in\Z.
        \end{align}
    \end{claim}

    \begin{proof}[Proof of Claim \ref{claim-ws-estimate}]
        Since \(B\) is hyperbolic with eigenvalues \(\lambda\) and \(\lambda^{-1}\), the sequence \((w_s)_{s\in\Z}\) satisfies the second-order recurrence
        \begin{align}
            w_{s+2}-(\tr A)w_{s+1}+w_s=0.
        \end{align}
        Hence there exist \(\alpha,\beta\in\R\), not both zero, such that
        \begin{align}
            w_s=\alpha\lambda^s+\beta\lambda^{-s}
            \qquad\text{for all } s\in\Z.
        \end{align}
        Moreover, \(\alpha\) and \(\beta\) depend linearly on \(m\). Indeed,
        \begin{align}
            w_0=m_2,\qquad w_1=e_2\cdot Bm=bm_1+dm_2,
        \end{align}
        and solving $\alpha+\beta=w_0,$ $\alpha\lambda+\beta\lambda^{-1}=w_1$ gives
        \begin{align*}
            \alpha=\frac{w_1-\lambda^{-1}w_0}{\lambda-\lambda^{-1}},
            \qquad
            \beta=\frac{\lambda w_0-w_1}{\lambda-\lambda^{-1}}.
        \end{align*}
        Therefore there exists a constant \(C_1=C_1(A)>0\) such that
        \begin{align}
            |\alpha|+|\beta|\le C_1\|m\|
        \end{align}
        from which the claim follows.
    \end{proof}

    \begin{claim}\label{claim:ws-vanishes}
        There exists \(L_m\le C_2\bigl(1+\log(1+\|m\|)\bigr)\) such that the following holds. If \(|s|\le T/2-L_m\) and \(w_s\equiv c\pmod N\), then \(w_s=\widetilde{c}\), where \(\tilde c\in[-N/2,N/2)\) is a representative of $c$.
    \end{claim}

    \begin{proof}[Proof of Claim \ref{claim:ws-vanishes}]
        By \eqref{eq:ar-pr} and \eqref{eq:estimate-on-N}, there exists \(c_A>0\) such that
        \begin{align}
            N'_T\ge c_A\lambda^{(T+1)/2}
        \end{align}
        for all sufficiently large \(T\). Choose \(L_m\ge1\) such that
        \begin{align*}
            C_1\|m\|\lambda^{-L_m}<\frac{c_A\lambda^{1/2}}{2}.
        \end{align*}
        Then \(L_m\le C_2\bigl(1+\log(1+\|m\|)\bigr)\) for some \(C_2=C_2(A)>0\). If \(|s|\le T/2-L_m\), then
        \begin{align*}
            |w_s|
            \le C_1\|m\|\lambda^{T/2-L_m}
            <\frac12 c_A\lambda^{(T+1)/2}
            \le \frac{N'_T}{2}.
        \end{align*}
        Therefore, for \(|s|\le T/2-L_m\), the congruence $w_s\equiv c\pmod N$ is equivalent to the exact equality \(w_s=\tilde c.\)
    \end{proof}

    We now count solutions in three ranges. First suppose
    \(0\le s\le T/2-L_m\). Then \(w_s=\tilde c\). Since
    \begin{align*}
        \alpha\lambda^s+\beta\lambda^{-s}=\tilde c
    \end{align*}
    is equivalent, after multiplying by \(\lambda^s\), to
    \begin{align*}
        \alpha(\lambda^s)^2-\tilde c\,\lambda^s+\beta=0,
    \end{align*}
    it has at most two real solutions in \(s\), and hence at most two integer solutions.

    Next suppose \(T/2+L_m\le s<T\). Write \(s=T-r\), so \(0\le r\le T/2-L_m\). Since \(A^T\equiv I\pmod N\), we have \(B^T\equiv I\pmod N\). Hence
    \begin{align*}
        e_2\cdot B^s m
        =
        e_2\cdot B^{T-r}m
        \equiv
        e_2\cdot B^{-r}m
        =
        w_{-r}
        \pmod N.
    \end{align*}
    As above, because \(|-r|\le T/2-L_m\), the congruence \(w_{-r}\equiv c\pmod N\) is equivalent to \(w_{-r}=\tilde c\). The equation
    \begin{align*}
        \alpha\lambda^{-r}+\beta\lambda^r=\tilde c
    \end{align*}
    again has at most two real solutions in \(r\).

    Finally, the middle window \(\left|s-\frac{T}{2}\right|<L_m\) contains \(O(L_m)\) integers. Combining the three ranges gives
    \begin{align*}
        \#\Bigl\{
        0\le s<T:
        e_2\cdot B^s m\equiv c\pmod N
        \Bigr\}
        \le
        C_A\bigl(1+\log(1+\|m\|)\bigr),
    \end{align*}
    after increasing \(C_A\). This proves the lemma.
\end{proof}

\section{Equidistribution of the odd short-period family}
\label{sec:equidistribution}

\begin{proof}[Proof of Theorem~\ref{thm:main-eqd}]
    Fix \(k\), and abbreviate
    \begin{align*}
        N\coloneqq N_k,\qquad
        t\coloneqq t_k,\qquad
        M\coloneqq M_k,\qquad
        j\coloneqq j_k,\qquad
        \omega\coloneqq \omega_k.
    \end{align*}
    For \(m\in\Z^2\), expanding the definition of \(v_k\) gives
    \begin{equation}\label{eq:Wvv-expand-final}
        \angles{W_N(m)v_k,v_k}
        =
        \frac1{t^2}
        \sum_{r,s=0}^{t-1}
        \omega^{-(r-s)}
        \angles{W_N(m)M^r e_j^0,M^s e_j^0}.
    \end{equation}
    We first prove the normalization estimate. Taking \(m=0\) in \eqref{eq:Wvv-expand-final}, the diagonal terms \(r=s\) contribute exactly \(1/t\). For \(r\neq s\), write \(q\coloneqq r-s\). By Lemma~\ref{lem:gauss},
    \begin{align*}
        \bigl|\angles{M^q e_j^0,e_j^0}\bigr|
        \le
        \frac{\sqrt{\gcd(N,b_{|q|})}}{\sqrt N}.
    \end{align*}
    Since \(t=2k+1\) is odd, every proper divisor of \(t\) is at most \(t/3\). Moreover, because \(t\) is odd, \(\gcd(t,2|q|)=\gcd(t,|q|).\) Therefore Lemma~\ref{lem:gcd} gives, for \(1\le |q|\le t-1\),
    \begin{align*}
        \gcd(N,b_{|q|})
        \le
        |b|\,N'_{\gcd(t,|q|)}
        \ll_A \lambda^{t/6}.
    \end{align*}
    On the other hand, \(N=N'_t\asymp_A \lambda^{(t+1)/2}.\) Hence there exists \(c_0>0\), depending only on \(A\), such that
    \begin{equation}\label{eq:offdiag-basic}
        \bigl|\angles{M^q e_j^0,e_j^0}\bigr|
        \ll_A N^{-c_0}
        \qquad (1\le |q|\le t-1).
    \end{equation}
    Summing the off-diagonal terms in \eqref{eq:Wvv-expand-final} and using \(t\asymp_A \log N\), we obtain
    \begin{equation}\label{eq:vk-norm-final}
        \|v_k\|^2
        =
        \frac1t+O_A(N^{-c_0})
        =
        \frac1t(1+o(1)).
    \end{equation}
    In particular, \(v_k\neq0\) for all sufficiently large \(k\).

    We now estimate the nonzero Fourier modes. Fix \(m\in\Z^2\setminus\{0\}\), and set \(B\coloneqq A^{\mathsf T}\). By Lemma~\ref{lem:W-basis} and the exact Egorov identity,
    \begin{align}\label{eq:egorov-matrix-element}
        \angles{W_N(m)M^r e_j^0,M^s e_j^0}
        =
        \angles{W_N(B^s m)M^{r-s}e_j^0,e_j^0}.
    \end{align}
    If \(r\neq s\), then Lemma~\ref{lem:gauss} and the estimate above give
    \begin{align*}
        \bigl|
        \angles{W_N(B^s m)M^{r-s}e_j^0,e_j^0}
        \bigr|
        \ll_A N^{-c_0},
    \end{align*}
    uniformly in \(r,s\). Hence the total off-diagonal contribution to \(\angles{W_N(m)v_k,v_k}\) is \(O_A(N^{-c_0})\). Dividing by \(\|v_k\|^2\) and using \eqref{eq:vk-norm-final}, the corresponding contribution to \(\angles{W_N(m)u_k,u_k}\) is \(O_A(tN^{-c_0}).\)
    
    It remains to estimate the diagonal terms \(r=s\). By Lemma~\ref{lem:W-basis}, \(\angles{W_N(B^s m)e_j^0,e_j^0}\) vanishes unless the second component of \(B^s m\) vanishes modulo \(N\). Therefore
    \begin{align*}
        \left|
        \frac1{t^2}
        \sum_{s=0}^{t-1}
        \angles{W_N(B^s m)e_j^0,e_j^0}
        \right|
        \le
        \frac1{t^2}
        \#\Bigl\{
        0\le s<t:
        e_2\cdot B^s m\equiv 0\pmod N
        \Bigr\}.
    \end{align*}
    By Lemma~\ref{lem:resonance},
    \begin{align}
        \#\Bigl\{
        0\le s<t:
        e_2\cdot B^s m\equiv 0\pmod N
        \Bigr\}
        \ll_A
        1+\log(1+\|m\|).
    \end{align}
    After dividing by \(\|v_k\|^2\), the diagonal contribution to \(\angles{W_N(m)u_k,u_k}\) is therefore \(O_A\!\left( \frac{1+\log(1+\|m\|)}{t} \right).\)

    Combining the diagonal and off-diagonal estimates, we obtain, for every \(m\in\Z^2\setminus\{0\}\),
    \begin{equation}\label{eq:modewise-final}
        \angles{W_N(m)u_k,u_k}
        =
        O_A(tN^{-c_0})
        +
        O_A\!\left(
        \frac{1+\log(1+\|m\|)}{t}
        \right).
    \end{equation}
    This proves \eqref{eq:main-eqd} when $a=e^{2\pi im\cdot z}$. The general case $a\in C^\infty(\bT^2)$ follows by density of the functions $e^{2\pi im\cdot z}$ in $C^\infty (\bT^2)$.
\end{proof}

\section{Coordinate profile}\label{sec:coordinate}

\begin{proof}[Proof of Theorem~\ref{thm:main-profile}]
    Fix \(k\), and abbreviate
    \begin{align}
        N\coloneqq N_k,\qquad t\coloneqq t_k,\qquad M\coloneqq M_k,\qquad j\coloneqq j_k,\qquad \omega\coloneqq \omega_k.
    \end{align}
    As in the proof of Theorem~\ref{thm:main-eqd}, there exists \(c_0>0\), depending only on \(A\), such that
    \begin{equation}\label{eq:odd-coordinate-dispersive-final}
        \bigl|\angles{M^r e_\alpha^0,e_\beta^0}\bigr| \ll_A N^{-c_0} \qquad \text{for all }\alpha,\beta\in\{0,\dots,N-1\} \text{ and all }1\le r\le t-1.
    \end{equation}
    Now expand the coordinates of \(v_k\):
    \begin{equation}\label{eq:vk-coordinate-expand-final}
         v_k(\ell)
         =
         \frac1t\sum_{r=0}^{t-1}\omega^{-r}\angles{M^r e_j^0,e_\ell^0}.
    \end{equation}
    We first consider the distinguished coordinate \(\ell=j\). Since \(\angles{M^0 e_j^0,e_j^0}=1\), while the terms with \(1\le r\le t-1\) satisfy
    \eqref{eq:odd-coordinate-dispersive-final}, we get
    \begin{align}
        v_k(j)
        =
        \frac1t+\frac1t\sum_{r=1}^{t-1}\omega^{-r}\angles{M^r e_j^0,e_j^0}
        =
        \frac1t+O_A(N^{-c_0}).\label{eq:vk-main-coordinate-final}
    \end{align}
    Now let \(\ell\neq j\). Then the \(r=0\) term in \eqref{eq:vk-coordinate-expand-final} vanishes, so
    \begin{align}
         v_k(\ell)
         =
         \frac1t\sum_{r=1}^{t-1}\omega^{-r}\angles{M^r e_j^0,e_\ell^0}
         =
         O_A(N^{-c_0}),\label{eq:vk-off-coordinate-final}
    \end{align}
    uniformly in \(\ell\neq j\). Next, from the proof of Theorem~\ref{thm:main-eqd},
    \begin{equation}\label{eq:vk-inverse-norm-sharp-final}
        \Verts{v_k}^{-1}
        =
        \sqrt t\,\bigl(1+O_A(tN^{-c_0})\bigr).
    \end{equation}
    Multiplying \eqref{eq:vk-main-coordinate-final} by \eqref{eq:vk-inverse-norm-sharp-final}, we get
    \begin{align}
        u_k(j)=v_k(j)\,\Verts{v_k}^{-1} =
        \left(\frac1t+O_A(N^{-c_0})\right)
        \left(\sqrt t\,\bigl(1+O_A(tN^{-c_0})\bigr)\right) =
        \frac1{\sqrt t}+O_A(\sqrt t\,N^{-c_0}),\label{eq:uk-main-coordinate-final}
    \end{align}
    which proves \eqref{eq:uk-large}. Likewise, multiplying \eqref{eq:vk-off-coordinate-final} by \eqref{eq:vk-inverse-norm-sharp-final}, we obtain
    \begin{align}
        u_k(\ell)
        =
        O_A(N^{-c_0})\cdot \sqrt t\,\bigl(1+O_A(tN^{-c_0})\bigr)
        =
        O_A(\sqrt t\,N^{-c_0})
        \qquad (\ell\neq j),\label{eq:uk-off-coordinate-final}
    \end{align}
    which proves \eqref{eq:uk-small}. Since \(tN^{-c_0}\to 0\), for all sufficiently large \(k\) we have
    \begin{align}
        |u_k(j_k)|\ge \frac{3}{4\sqrt{t_k}},
        \qquad
        |u_k(\ell)|\le \frac{1}{4\sqrt{t_k}}
        \quad (\ell\neq j_k).
    \end{align}
    Therefore \(j_k\) is the unique maximizing coordinate of \(u_k\) for all sufficiently large \(k\). Moreover, \eqref{eq:uk-norm} follows from \eqref{eq:uk-main-coordinate-final}.
\end{proof}

\section{Even short-period eigenfunctions}\label{sec:even}

\begin{theorem}\label{thm:even-main}
    Consider the even short-period sequence \(N_k=N'_{2k}\). If \(t_k=2k\) for all sufficiently large \(k\), then \(v_k\neq0\) for all sufficiently large \(k\), and \eqref{eq:main-eqd} holds. If \(t_k=4k\) along an infinite subsequence, then \eqref{eq:main-eqd} holds along every further subsequence for which \(v_k\neq0\). In either case, there exists a set of indices \(S_k\subset\{0,\dots,N_k-1\},\qquad |S_k|\le4,\) such that
    \begin{subequations}
        \begin{align}
            u_k(\ell)
            &=
            O_A\!\bigl(\sqrt{k}\,N_k^{-c_0}\bigr)
            \qquad &&(\ell\notin S_k),\\
            |u_k(\ell)|
            &\asymp_A k^{-1/2}
            \qquad &&(\ell\in S_k).
        \end{align}
    \end{subequations}
\end{theorem}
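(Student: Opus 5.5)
We follow the odd-case proofs of Theorems~\ref{thm:main-eqd} and~\ref{thm:main-profile}, with one change. The powers $M^q$ with $q$ in the exceptional residue classes, namely $q\equiv k\pmod{2k}$ and, when $t_k=4k$, also $q\equiv 2k \pmod{4k}$, are no longer dispersive. Lemmas~\ref{lem:even-half-period-2k} and~\ref{lem:even-half-period-4k} compute them exactly instead. Throughout, write $N=N'_{2k}$, $t=t_k\in\{2k,4k\}$, $M=M_{N,0}$, and $j=j_k$.

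\emph{Step 1: the dispersive exponents.} For $q\not\equiv 0\pmod k$ with $1\le |q|<t$, reduce $q$ modulo $2k$ (we have $M^{2k}=e^{i\varphi}I$ or $M^{4k}=e^{i\varphi}I$). Lemma~\ref{lem:gcd} with $T=2k$ then gives
\begin{align*}
\gcd(N,b_{|q|})\le |b|\,N'_{\gcd(2k,2|q|)}.
\end{align*}
Since $\gcd(2k,2q)=2\gcd(k,q)\le 2k/2=k$, and $N'_{k}\ll\lambda^{k/2}$ while $N\asymp\lambda^{k}$, Lemma~\ref{lem:gauss} yields $|\langle W_N(m)M^q e_\alpha^0,e_\beta^0\rangle|\ll_A N^{-c_0}$. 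We must check that the divisor $\gcd(k,q)$ is at most $k/2$ when $k\nmid q$, which holds. The remaining residues are $q\equiv 0$ and $q\equiv k\pmod{2k}$.

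\emph{Step 2: exact structure at the exceptional exponents.} Set $\rho\coloneqq \omega^{-k}$. By Lemma~\ref{lem:even-half-period-2k}, $M^{k}e_j^0$ is supported on $e^0_{a_kj}$ and $e^0_{a_kj+N/2}$ with coefficients of modulus $2^{-1/2}$. In the case $t=2k$, $M^{2k}$ is a scalar, so
\begin{align*}
v_k=\tfrac12\bigl(e_j^0+\rho M^k e_j^0\bigr)+O(N^{-c_0})
\end{align*}
coordinatewise. Up to an $O(N^{-c_0})$ error, this vector is supported on $S_k=\{j,\ a_kj,\ a_kj+N/2\}$. In the case $t=4k$, Lemma~\ref{lem:even-half-period-4k} gives $M^{2k}e_j^0=\eta e^0_{j+N/2}$, and $M^{3k}e_j^0=M^k(\eta e^0_{j+N/2})$ is supported on $a_kj+a_kN/2$ and $a_kj+a_kN/2+N/2$. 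Since $a_k$ is odd or even, this is again the pair $\{a_kj,\ a_kj+N/2\}$. Hence $S_k\subset\{j,\ j+N/2,\ a_kj,\ a_kj+N/2\}$, so $|S_k|\le 4$. The main-term vector $v^{\mathrm{main}}$ is an explicit combination of at most four basis vectors with coefficients of size $\asymp 1/t$ or smaller.

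\emph{Step 3: nonvanishing and normalization.}
\begin{itemize}
\item If $t=2k$, then $a_kj\not\equiv j$. If they coincided, $M^k e_j^0$ would overlap $e_j^0$; this should follow from the same parity bookkeeping as in Claim~\ref{claim:oddity}, with $a_k$ even. The two summands are then orthogonal, so $\|v^{\mathrm{main}}\|^2=\tfrac12$. Adding the $O(N^{-c_0})$ cross terms gives $\|v_k\|^2\asymp 1/t$, and $v_k\ne 0$.
\item If $t=4k$, the main part is $\frac14\sum_{i=0}^{3}(\rho M^k)^i e_j^0$. This can cancel, since $\eta$ and the phases may be arranged so that $1+\rho^2\eta=0$; that is exactly the vanishing phenomenon, so we only assume $v_k\ne0$. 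The difficulty is that we need $\|v_k\|^2\gg 1/t$, not merely $v_k\ne 0$. My plan is to show that the main part is either exactly zero or has norm $\gg 1$, so that if it is zero, $v_k$ is $O(N^{-c_0})$ small but still nonzero. That degenerate case is the main obstacle. I would try to show that when the main part vanishes, $v_k$ is in fact exactly $0$: the main part is the spectral projection of $e_j^0$ computed at the level of $M^k$, and the dispersive remainder is orthogonal to it. This exact-vanishing statement is the step I expect to be hardest. Otherwise I would restrict the statement to the subsequence on which the main part is nonzero.
\end{itemize}

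\emph{Step 4: equidistribution and profile.} The diagonal terms are handled exactly as before, using Lemma~\ref{lem:resonance} with $T=2k$; this costs a factor of $2$ when $t=4k$. Off-diagonal terms with dispersive $q$ are $O(N^{-c_0})$. The new off-diagonal terms have $q\equiv k$ or $2k$. After the Egorov step they become $\langle W_N(B^sm)M^{q}e_j^0,e_j^0\rangle$, which is nonzero only when $e_2\cdot B^sm$ lies in one of at most four residues modulo $N$ determined by $j$, $a_k$ and $N/2$. Lemma~\ref{lem:resonance} allows an arbitrary residue $c$, so there are $O(1+\log(1+\|m\|))$ such $s$. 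This gives a total contribution of $O(\log(1+\|m\|)/t)$ after normalization, which proves \eqref{eq:main-eqd}. Finally, dividing the coordinate expansion from Step 2 by $\|v_k\|\asymp t^{-1/2}$ gives $|u_k(\ell)|\asymp k^{-1/2}$ on $S_k$ (after discarding from $S_k$ any index whose main coefficient cancels) and $O(\sqrt k\,N^{-c_0})$ off $S_k$.
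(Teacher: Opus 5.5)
Your argument has a genuine gap in the case $t_k=4k$, at the crux of that case. Under the sole hypothesis $v_k\neq0$ you never obtain $\|v_k\|^2\gg 1/k$. Your fallback, passing to the subsequence where the main part is nonzero, proves a weaker statement than the theorem.

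Two ingredients are missing. The first is an exact identity. With $Q=\omega^{-1}M$, $T=Q^k$, $S=Q^{2k}$ one has
\[
\sum_{t=0}^{4k-1}Q^t=\sum_{r=0}^{k-1}Q^r(I+T)(I+S),
\qquad\text{so}\qquad
v_k=\frac1{4k}\sum_{r=0}^{k-1}Q^r h_k,\quad h_k\coloneqq(I+T)(I+S)e_j^0,
\]
and $h_k$ is four times your main part. Hence a vanishing main part forces $v_k=0$ exactly. An orthogonality property of the dispersive remainder could not give this, since it is vacuous when the main part is zero.

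The second ingredient is the dichotomy $\|h_k\|^2\in\{0,4,8\}$.
\begin{itemize}
\item Because $Se_m^0$ is a phase times $e^0_{m+N/2}$ and $S^2=I$, each block $W_m=\operatorname{span}\{e_m^0,e^0_{m+N/2}\}$ contains a one-dimensional $(+1)$-eigenline $L_m^+$ of $S$.
\item $T$ commutes with $S$ and maps $W_m$ into $W_{a_km}$ by Lemma~\ref{lem:even-half-period-2k}, so $T(L_m^+)=L^+_{a_km}$.
\item If $a_kj\not\equiv j\pmod{N/2}$, the two pieces $(I+S)e_j^0$ and $T(I+S)e_j^0$ lie in orthogonal blocks, and $\|h_k\|^2=4$.
\item Otherwise $T$ acts on $L_j^+$ by $\pm1$, since $T^2=S=I$ there. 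So $h_k$ is either $0$ or $2(I+S)e_j^0$.
\end{itemize}
With $\|h_k\|\ge2$, the Gram expansion of $\frac1{4k}\sum_{r<k}Q^rh_k$ gives $\|v_k\|^2=\|h_k\|^2/(16k)+O_A(N^{-c_0})$. The same structure rules out near-cancellation among the coefficients on $S_k$, which your lower bound $|u_k(\ell)|\asymp k^{-1/2}$ also needs.

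Incidentally, $1+\rho^2\eta=0$ is not the cancellation mechanism. Since $e_j^0$ and $e^0_{j+N/2}$ are distinct basis vectors, $(I+S)e_j^0$ never vanishes. Vanishing comes from $T(I+S)e_j^0=-(I+S)e_j^0$.

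The case $t_k=2k$ reaches the right conclusion, but two steps are wrong as written.
\begin{itemize}
\item The claim $a_kj\not\equiv j$ is false (take $j=0$). It is also unnecessary: $|\langle M^ke_j^0,e_j^0\rangle|\le2^{-1/2}$ already gives $\|(I+\rho M^k)e_j^0\|^2\ge2-\sqrt2$.
\item You cannot read $\|v_k\|^2$ off a coordinatewise approximation. The main coefficients are $\frac1{2k}$, not $\frac12$. An $O(N^{-c_0})$ error on each of $N$ coordinates gives no $\ell^2$ control: in the odd case the main part $\frac1te_j^0$ has norm $1/t$, while $\|v_k\|\approx t^{-1/2}$. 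Compute the norm from the Gram expansion, or from $\|v_k\|^2=\langle v_k,e_j^0\rangle$. The latter holds because $v_k$ is the orthogonal projection of $e_j^0$ onto $\ker(Q-I)$, and it gives $\|v_k\|^2=\bigl(1+\rho\langle M^ke_j^0,e_j^0\rangle\bigr)/(2k)+O_A(N^{-c_0})$.
\end{itemize}

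In Step~1, reducing modulo $2k$ is not legitimate when $t_k=4k$, since $M^{2k}$ is not scalar. The bound survives anyway, because the proof of Lemma~\ref{lem:gcd} never uses $r<T$. Your Step~4 treatment of the exceptional exponents via Lemma~\ref{lem:resonance} with an arbitrary residue is sound once the normalization is in place.
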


\begin{figure}[!htb]
    \centering
    \includegraphics[width=\linewidth]{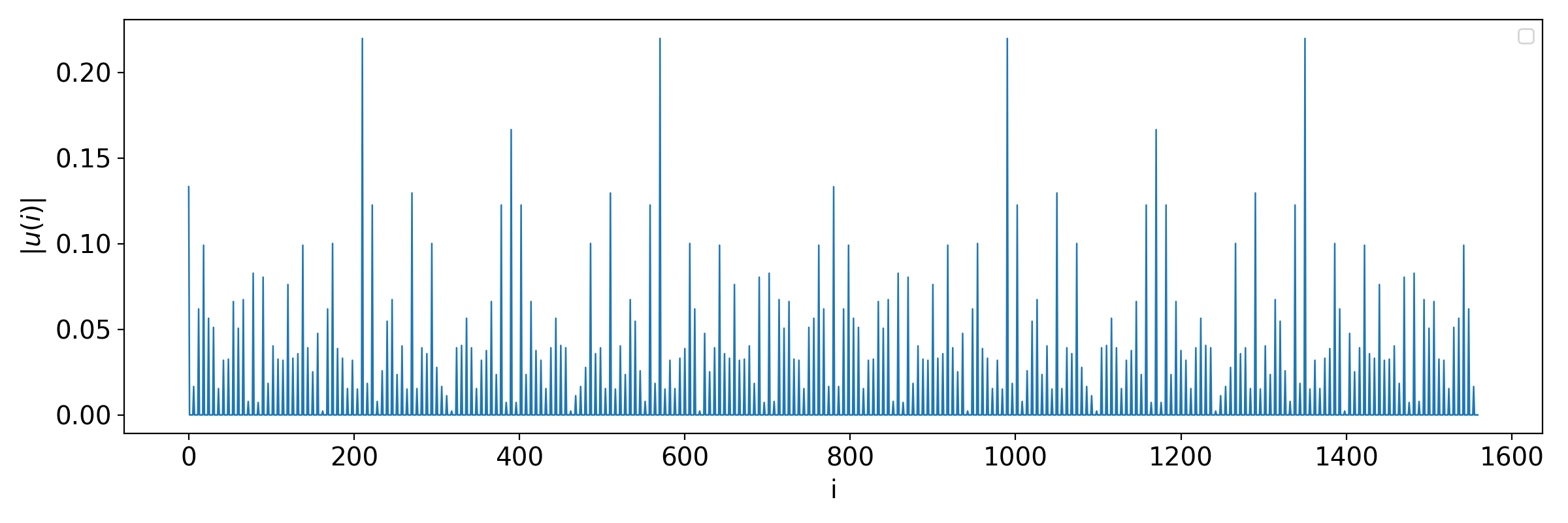}
    \includegraphics[width=\linewidth]{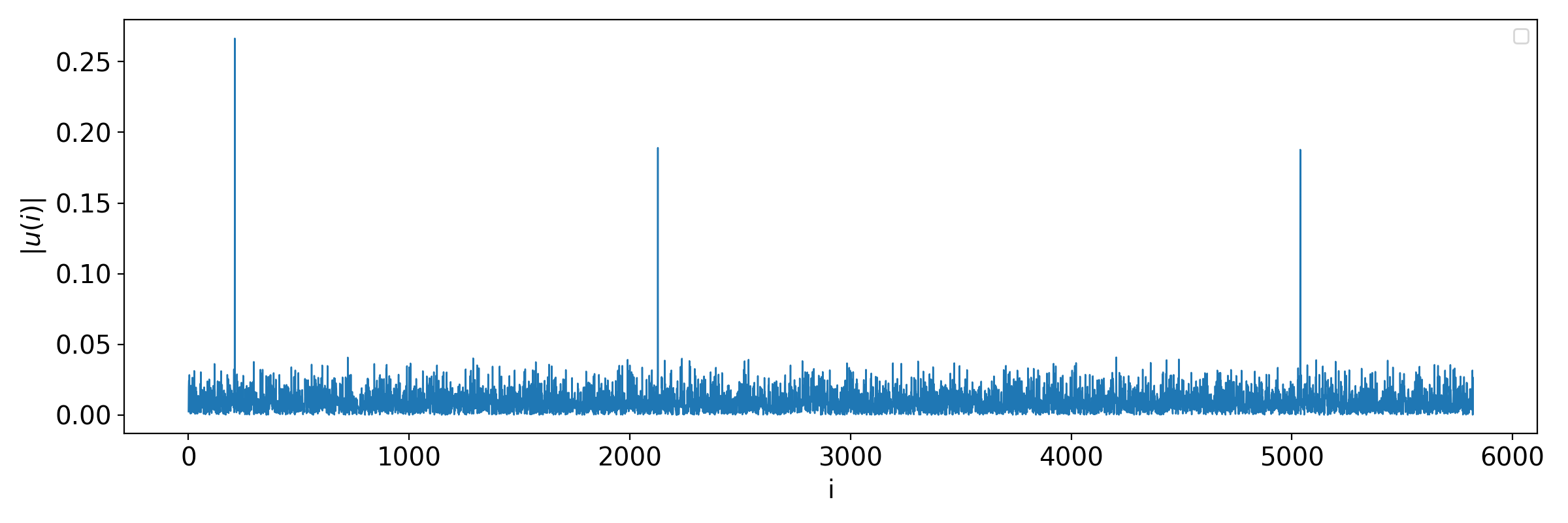}
    \caption{The plots of a maximal $\ell^\infty$-norm, $\ell^2$-normalized eigenfunction of $M_{N,0}$, where $M_{N,0}$ corresponds to $A=\begin{pmatrix}
         2 & 3\\1 & 2
    \end{pmatrix}$. Top: $N=1560$. Bottom: $N=5822$.}
    \label{fig:placeholder}
\end{figure}

\begin{proof}
    Fix \(k\), and abbreviate \(N\coloneqq N_k,\) \(p=p_k\) \(M\coloneqq M_k,\) \(j\coloneqq j_k,\) and \(\omega\coloneqq \omega_k.\)

    First let $t_k=2k$. Set \(g_k\coloneqq (I+\omega^{-k}M^k)e_j^0\) so that 
    \begin{align}
        v_k
        =
        \frac1{2k}\sum_{r=0}^{2k-1}\omega^{-r}M^r e_j^0
        =
        \frac1{2k}\sum_{r=0}^{k-1}\omega^{-r}M^r g_k.
    \end{align}
    By Lemma~\ref{lem:even-half-period-2k}, the vector \(M^k e_j^0\) is supported on two basis vectors with coefficients of modulus \(1/\sqrt2\). Hence \(g_k\) is supported on at most three basis vectors, and
    \begin{align*}
        \|g_k\|^2
        =
        2+2\Re\bigl(\omega^{-k}\langle M^k e_j^0,e_j^0\rangle\bigr)
        \ge 2-\sqrt2>0.
    \end{align*}
    Using
    \begin{align*}
        \|v_k\|^2
        =
        \frac{1}{4k^2}\sum_{r,s=0}^{k-1}\omega^{-(r-s)}
        \langle M^{r-s}g_k,g_k\rangle
    \end{align*}
    and analyzing the diagonal \(r=s\) and off-diagonal \(r\neq s\) as in the proof of Theorem \ref{thm:main-eqd}, we obtain the desired equidistribution result. The result on support and mass follow from an argument similar to that in the proof of Theorem \ref{thm:main-profile}.

    Now let \(t_k=4k\). Set
    \begin{align*}
        Q\coloneqq \omega^{-1}M,\qquad T\coloneqq Q^k,\qquad S\coloneqq Q^{2k}.
    \end{align*}
    Then \(Q^{4k}=I\), \(T^2=S\), \(S^2=I\), and \(S\) commutes with \(T\). By Lemma~\ref{lem:even-half-period-4k}, \(Se_j^0=\xi_{k,j}e_{j+p}^0\) for some \(\xi_{k,j}\in S^1\). Hence
    \begin{align*}
        g_k\coloneqq (I+S)e_j^0=e_j^0+\xi_{k,j}e_{j+p}^0,
        \qquad
        \|g_k\|^2=2.
    \end{align*} 
    Define \(h_k\coloneqq (I+T)g_k.\) Grouping the \(4k\)-term average into residue classes modulo \(k\), we get
    \begin{align*}
        v_k
        =\frac{1}{4k}\sum_{r=0}^{k-1}Q^r h_k.
    \end{align*}
    
    \begin{claim}\label{claim:h-does-not-vanish}
        For sufficiently large \(k\), \(\Verts{h_k}\geq 2\) and \(\# \supp (h_k) \leq 4\).
    \end{claim}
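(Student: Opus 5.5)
The plan is to expand $\|h_k\|^2$ directly and show that the cross term vanishes, so that in fact $\|h_k\|^2=4$; the support bound then comes from the structure lemmas. Write $T=\omega^{-k}M^k$. Since $T$ is unitary and $\|g_k\|^2=2$,
\begin{align*}
  \|h_k\|^2=\|g_k\|^2+\|Tg_k\|^2+2\Re\langle Tg_k,g_k\rangle = 4+2\Re\bigl(\omega^{-k}\langle M^kg_k,g_k\rangle\bigr).
\end{align*}
The lower bound $\|h_k\|\ge 2$ is therefore equivalent to $\Re\bigl(\omega^{-k}\langle M^kg_k,g_k\rangle\bigr)\ge 0$. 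I will aim for the stronger statement $\langle M^kg_k,g_k\rangle=0$.

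\textbf{Step 1: support bound.} Lemma~\ref{lem:even-half-period-4k}, with $N_k=2p$, gives $Se_j^0=\xi_{k,j}e^0_{j+p}$. Lemma~\ref{lem:even-half-period-2k} shows that $M^ke_j^0$ is supported on $\{a_kj,\,a_kj+p\}$ and $M^ke^0_{j+p}$ on $\{a_k(j+p),\,a_k(j+p)+p\}$. Because indices are taken modulo $2p$, $a_kp\equiv 0$ or $p$ according to the parity of $a_k$. Hence both vectors are supported on the same pair $P\coloneqq\{a_kj,\,a_kj+p\}$. It follows that $\supp Tg_k\subset P$ and $\supp h_k\subset\{j,j+p\}\cup P$, which has at most four elements.

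\textbf{Step 2: the cross term when $P\cap\{j,j+p\}=\emptyset$.} Here $\langle Tg_k,g_k\rangle=0$ trivially, so $\|h_k\|^2=4$.

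\textbf{Step 3: the overlap case $P=\{j,j+p\}$.} Since $a_k\equiv -p_{k-1}\pmod p$, this happens exactly when $(p_{k-1}+1)j\equiv 0\pmod p$; for instance it happens for $j=0$. In this case I will compute the four entries $\langle M^ke^0_{j+\delta p},e^0_{j+\delta' p}\rangle$, $\delta,\delta'\in\{0,1\}$, from \eqref{eq:explicit-formula} applied to $A^k$. I will copy the proof of Lemma~\ref{lem:even-half-period-4k}:
\begin{itemize}
\item write $b_k=bp_k$ and $N_k=2p_k$;
\item substitute $q=u+|b|v$ in the Gauss sum, so that the $v$-sum enforces the congruence modulo $p$;
\item reduce each entry to a short sum over $u\in\{0,\dots,|b|-1\}$ (or over $2|b|$ terms) with explicit phases.
\end{itemize}
The goal is a pairing argument like the one in Claim~\ref{claim:l-vanishes}: pair $u$ with $u+|b|$, or the contributions of $e_j^0$ with those of $\xi_{k,j}e^0_{j+p}$. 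The parity facts to use are that $b,c$ are odd, $a,d$ are even, and $p_k,p_{k-1}$ have the parities recorded in Claim~\ref{claim:oddity}. I would also use that $\xi_{k,j}$ is itself a Gauss-sum phase, computed in that lemma. With these, the pairing should show that
\begin{align*}
\langle M^k g_k,g_k\rangle=\langle M^ke_j^0,e_j^0\rangle+\xi_{k,j}\langle M^ke^0_{j+p},e_j^0\rangle+\overline{\xi_{k,j}}\langle M^ke_j^0,e^0_{j+p}\rangle+\langle M^ke^0_{j+p},e^0_{j+p}\rangle
\end{align*}
cancels exactly. As a consistency check, the relation $T^2=S$ restricted to $\operatorname{span}\{e_j^0,e^0_{j+p}\}$ should force these entries into the pattern that makes the cancellation happen.

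\textbf{Main obstacle.} Step 3 is the hard part. In the overlap case, $T$ preserves $V=\operatorname{span}\{e_j^0,e_{j+p}^0\}$ and commutes with $S|_V$. Since $Sg_k=g_k$ (use $S^2=I$) and the eigenspaces of $S|_V$ are simple, $g_k$ is an eigenvector of $T|_V$ with eigenvalue $\pm1$. So the cross term is either $0$ or real of modulus $2$, and its sign could in principle depend on $\sigma_k$ through $\omega^{-k}$. The Gauss-sum computation therefore has to be done carefully enough either to show that $g_k$ is a $+1$ eigenvector of $T|_V$, which gives $\|h_k\|^2=8$, or to show that the overlap case cannot occur for the relevant $j$ once $k$ is large. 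If the direct pairing fails, I will try to rule out the $-1$ case by tracking the phase $\varphi_k$ together with the explicit phase of $\langle M^ke_j^0,e_j^0\rangle$ modulo $4k$.
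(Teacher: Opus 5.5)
\textbf{There is a genuine gap: Step 3 aims at a false statement, and the idea that closes the proof is the standing hypothesis $v_k\neq0$.}

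Your Steps 1 and 2 are correct and amount to the paper's block argument. $T$ maps $\operatorname{span}\{e^0_m,e^0_{m+p}\}$ onto $\operatorname{span}\{e^0_{a_km},e^0_{a_km+p}\}$. So $h_k$ lives on at most two such blocks, and when the blocks differ you get $\|h_k\|^2=4$.

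The problem is Step 3. In the overlap case you show yourself that $g_k$ is an eigenvector of $T|_V$ with eigenvalue $\pm1$. Hence $\langle M^kg_k,g_k\rangle=\pm2\omega^k\neq0$ and $\|h_k\|^2\in\{0,8\}$. No pairing of Gauss-sum terms can make this cross term cancel.

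Your fallback goals fail as well:
\begin{itemize}
\item The overlap case cannot be excluded. $j_k$ is arbitrary, and $j_k=0$ (or any $j$ with $(p_{k-1}+1)j\equiv0\pmod p$) always overlaps.
\item The sign cannot be forced to be $+1$. Replacing $\sigma_k$ by $\sigma_k+2$ multiplies $\omega^{-k}$ by $e^{-i\pi}=-1$ and $\omega^{-2k}$ by $e^{-2\pi i}=1$. So $S$, and hence $g_k$, is unchanged while $T$ becomes $-T$. For every $k$ with $t_k=4k$, one of the two branches therefore gives $Tg_k=-g_k$ and $h_k=0$. This is exactly Proposition~\ref{prop:vanishing-happens}.
\end{itemize}
Read without further hypotheses, the claim is false. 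Tracking $\varphi_k$ modulo $4k$ cannot rescue it, because $\sigma_k$ is a free parameter.

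What you are missing is the hypothesis under which the claim is used. In the case $t_k=4k$, Theorem~\ref{thm:even-main} only concerns indices with $v_k\neq0$. Since $v_k=\frac1{4k}\sum_{r=0}^{k-1}Q^rh_k$, this forces $h_k\neq0$. Your own dichotomy then gives $\|h_k\|^2=8$ in the overlap case. Combined with Step 2, this yields $\|h_k\|\ge2$ with no Gauss-sum computation at all. This is how the paper concludes, phrased via the $+1$-eigenlines $L_m^+$ of $S$ and the relation $T(L_m^+)=L^+_{a_km}$.
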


    \begin{proof}[Proof of Claim \ref{claim:h-does-not-vanish}]
        For \(m\in\mathbb Z/p\mathbb Z\), define the two-dimensional block \(W_m\coloneqq \operatorname{span}\{e_m^0,e_{m+p}^0\}.\) The operator \(S\) preserves each \(W_m\), and its \(+1\)-eigenspace in \(W_m\) is one-dimensional. Let \(L_m^+\subset W_m\) denote this line. 
        
        Since \(g_k\in L_j^+\), and by Lemma~\ref{lem:even-half-period-2k} we have \(T(W_m)=W_{a_km},\) \(m\in\mathbb Z/p\mathbb Z,\) while \(T\) commutes with \(S\), we have \(T(L_m^+)=L_{a_km}^+.\) Therefore \(h_k=(I+T)g_k\) is supported on at most the two blocks \(W_j\cup W_{a_kj}.\) In particular, \(\#\supp(h_k)\le4.\)

        To establish the lower bound, choose unit vectors \(f_m^+\in L_m^+\). Since \(g_k=\sqrt2\,f_j^+\) up to phase, there exists \(\rho_{k,j}\in S^1\) such that
        \begin{align*}
            h_k=\sqrt2\bigl(f_j^+ + \rho_{k,j}f_{a_kj}^+\bigr).
        \end{align*}
        If \(a_kj\not\equiv j\pmod p\), the two lines lie in orthogonal blocks, and \(\|h_k\|^2=4.\) If instead \(a_kj\equiv j\pmod p\), then \(T\) preserves \(L_j^+\). Since \(T^2=S\) and \(S=I\) on \(L_j^+\), the action of \(T\) on \(L_j^+\) is multiplication by \(\pm1\). Hence either \(h_k=0\) or \(\|h_k\|^2=8\). Since \(v_k\neq0\), we must have \(h_k\neq0\), and therefore \(\|h_k\|\ge2.\)
    \end{proof}
    As in the case \(t_k=2k\), the Theorem follows from arguments similar to that in the proof of Theorem \ref{eq:main-eqd} and Theorem \ref{thm:main-profile}.
\end{proof}

The following result constructs vanishing projector states $v_k=0$ for infinitely many $k$ when $t_k=4k$. Thus one can't have $v_k\neq 0$ for sufficiently large $k$ unlike in the case of $t_k=2k$ or $t_k=2k+1$. Hence the non-vanishing assumption in Theorem \ref{thm:even-main} is required when $t_k=4k$.
\begin{proposition}\label{prop:vanishing-happens}
    Assume that \(t_k=4k\) for infinitely many \(k\). Then there exist \(\sigma\in\{0,2\}\) and an infinite set \(\mathcal K\subset \N\) such that
    \begin{align}
        v_k=\frac1{4k}\sum_{t=0}^{4k-1}\omega_k^{-t}M_k^t e_0^0=0 \qquad\text{for every }k\in\mathcal K,
    \end{align}
    where \(\omega_k\coloneqq \exp\!\Bigl(i\frac{\varphi_k+2\pi\sigma}{4k}\Bigr)\) and \(j_k\coloneqq 0\).
\end{proposition}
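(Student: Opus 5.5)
Use the structure from the proof of Theorem~\ref{thm:even-main} in the case \(t_k=4k\). With \(Q\coloneqq\omega_k^{-1}M_k\), \(T\coloneqq Q^k\), \(S\coloneqq Q^{2k}\), we have \(v_k=\frac1{4k}\sum_{r=0}^{k-1}Q^r h_k\), where \(h_k=(I+T)(I+S)e_0^0\). So it suffices to make \(h_k=0\). With \(j=0\) and \(N_k=2p_k\), the block \(W_0=\operatorname{span}\{e_0^0,e_{p_k}^0\}\) satisfies \(T(W_0)=W_{a_k\cdot 0}=W_0\) by Lemma~\ref{lem:even-half-period-2k}. Thus the degenerate case \(a_kj\equiv j\pmod{p_k}\) of Claim~\ref{claim:h-does-not-vanish} holds automatically. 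There \(T\) preserves the line \(L_0^+\) and acts on it by a sign \(\epsilon_k\in\{\pm1\}\), and \(h_k=0\) exactly when \(\epsilon_k=-1\).

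It remains to control \(\epsilon_k\) through the choice of \(\sigma\). Replacing \(\sigma\) by \(\sigma+2\) multiplies \(\omega_k^{-1}\) by \(e^{-2\pi i\cdot 2/(4k)}=e^{-\pi i/k}\). Hence \(Q\) becomes \(e^{-\pi i/k}Q\) and \(T=Q^k\) becomes \(e^{-\pi i}T=-T\). The operator \(S=Q^{2k}\) is multiplied by \(e^{-2\pi i}=1\), so it is unchanged, and so are the line \(L_0^+\) and the vector \(g_k=(I+S)e_0^0\). Consequently, for each \(k\) with \(t_k=4k\), exactly one of \(\sigma=0\) and \(\sigma=2\) gives \(\epsilon_k=-1\), i.e.\ \(h_k=0\) and hence \(v_k=0\). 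Since there are infinitely many such \(k\), pigeonhole gives a single \(\sigma\in\{0,2\}\) that works for an infinite set \(\mathcal K\).

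The step that needs the most care is confirming that \(T\) acts on \(L_0^+\) as a scalar in \(\{\pm1\}\) rather than merely preserving \(W_0\). This follows because \(T\) commutes with \(S\) and so preserves the \(+1\)-eigenspace of \(S|_{W_0}\). By Lemma~\ref{lem:even-half-period-4k}, that eigenspace is one-dimensional: \(S\) swaps \(e_0^0\) and \(e_{p_k}^0\) up to phases and \(S^2=I\), so it has eigenvalues \(\pm1\), each of multiplicity one. On this line \(T^2=S=I\), so the scalar is \(\pm1\). One should also check that \(S=Q^{2k}\) genuinely squares to the identity, i.e.\ \(Q^{4k}=\omega_k^{-4k}M_k^{4k}=e^{-i\varphi_k}e^{i\varphi_k}I=I\) for either \(\sigma\). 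This holds since \(\omega_k^{4k}=e^{i\varphi_k}e^{2\pi i\sigma}=e^{i\varphi_k}\).
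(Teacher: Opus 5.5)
Your proposal is correct and follows essentially the same route as the paper. You reduce to \(h_k=(I+T)(I+S)e_0^0\), note that \(T\) acts on the \(S\)-fixed line \(L_0^+\subset W_0\) by a sign, and observe that switching from \(\sigma=0\) to \(\sigma=2\) replaces \(T\) by \(-T\) while leaving \(S\) unchanged, so exactly one branch gives \(h_k=0\); pigeonhole over \(k\) then fixes \(\sigma\). Your extra checks, that \(Q^{4k}=I\) for either \(\sigma\) and that the \(+1\)-eigenspace of \(S|_{W_0}\) is one-dimensional by Lemma~\ref{lem:even-half-period-4k}, fill in details the paper leaves implicit.
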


\begin{proof}
    Let \(\mathcal K_0\subset\mathbb N\) be an infinite set on which \(t_k=4k\). Fix \(k\in\mathcal K_0\), and take \(j=0\). For a branch \(\sigma\in\mathbb Z\), set
    \begin{align*}
        \omega_{k,\sigma}\coloneqq \exp\!\Bigl(i\frac{\varphi_k+2\pi\sigma}{4k}\Bigr),
        \qquad
        Q_\sigma\coloneqq \omega_{k,\sigma}^{-1}M_k,
    \end{align*}
    and define \(T_\sigma\coloneqq Q_\sigma^k,\) and \(S_\sigma\coloneqq Q_\sigma^{2k}.\) As in the proof of Theorem \ref{thm:even-main}, set
    \begin{align*}
        v_{k,\sigma}
        =
        \frac1{4k}\sum_{r=0}^{k-1}Q_\sigma^r h_\sigma,
        \qquad
        h_\sigma\coloneqq (I+T_\sigma)(I+S_\sigma)e_0^0.
    \end{align*}
    Since \(0\) is fixed by the block map \(m\mapsto a_km\pmod{p_k}\), the line \(L_0^+\subset W_0\) is preserved by \(T_\sigma\). Also \((I+S_\sigma)e_0^0\in L_0^+\) is nonzero. Hence \(T_\sigma\) acts on this line by a sign:
    \begin{align*}
        T_\sigma (I+S_\sigma)e_0^0
        =
        \varepsilon_{k,\sigma}(I+S_\sigma)e_0^0,
        \qquad
        \varepsilon_{k,\sigma}\in\{\pm1\}.
    \end{align*}
    Therefore \(h_\sigma = (1+\varepsilon_{k,\sigma})(I+S_\sigma)e_0^0.\) 
    
    Now compare the two branches \(\sigma=0\) and \(\sigma=2\). Since \(\omega_{k,2}^{-k}=-\omega_{k,0}^{-k},\) and \(\omega_{k,2}^{-2k}=\omega_{k,0}^{-2k},\) we have \(T_2=-T_0\) and \(S_2=S_0.\) Thus \(\varepsilon_{k,2}=-\varepsilon_{k,0}.\) Consequently exactly one of \(h_0\) and \(h_2\) is zero. When \(h_\sigma\) vanishes so does  \(v_{k,\sigma}.\) For each \(k\in\mathcal K_0\), choose \(\sigma_k\in\{0,2\}\) such that \(v_{k,\sigma_k}=0\). Since \(\mathcal K_0\) is infinite, one of the two choices \(\sigma=0\) or \(\sigma=2\) occurs for infinitely many \(k\). Passing to that infinite subset \(\mathcal K\), we obtain a fixed \(\sigma\in\{0,2\}\) such that \(v_k=0\) for all \(k\in\mathcal K.\)
\end{proof}

Nevertheless, the following proposition shows that the density of indices $j$ at which 
\begin{align}
    v_k=\frac1{4k}\sum_{t=0}^{4k-1}\omega_k^{-t}M_k^t e_j^0=0
\end{align}
for large $k$ is small as $k$ goes to $\infty$.
\begin{proposition}\label{prop:vanishing-is-rare}
    Let \(t_k=4k\). Then for sufficiently large \(k\),
    \begin{align}
        \frac{\# \{j\in\{0,\dots,N_k-1\}: v_k=0\}}{N_k}\ll_{A} N_k^{-1/2}.
    \end{align}
\end{proposition}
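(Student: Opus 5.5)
From the proof of Claim~\ref{claim:h-does-not-vanish}, the plan is to show that \(v_k=0\) forces \(h_k=0\), and that \(h_k=0\) is possible only when \(a_kj\equiv j\pmod{p_k}\). The count then reduces to counting fixed points of the block map \(m\mapsto a_km\) on \(\Z/p_k\Z\), where \(N_k=2p_k\).

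First, the two implications.
\begin{itemize}
\item[(i)] If \(a_kj\not\equiv j\pmod{p_k}\), the claim gives \(\|h_k\|^2=4\), so \(h_k\neq0\). To see that \(v_k\neq0\) as well, I will compute \(\|v_k\|^2=\frac{1}{16k^2}\sum_{r,s=0}^{k-1}\langle Q^{r-s}h_k,h_k\rangle\). The diagonal terms contribute \(\|h_k\|^2/(16k)\ge 1/(4k)\).
\item[(ii)] For the off-diagonal terms, \(0<|r-s|<k\), I will bound \(|\langle M^{q}e_\alpha^0,e_\beta^0\rangle|\) via Lemma~\ref{lem:gauss} and Lemma~\ref{lem:gcd} with \(T=2k\). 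This gives \(\gcd(N,b_{|q|})\le|b|N'_{\gcd(2k,2|q|)}\). Since \(\gcd(2k,2|q|)=2\gcd(k,|q|)\le k\), we have \(N'_{\le k}\ll\lambda^{k/2}\), while \(N\asymp\lambda^{k}\). So each entry is \(\ll N^{-c}\), and since \(h_k\) has at most four coordinates the off-diagonal total is \(O(N^{-c})\). Hence \(v_k\neq0\) for large \(k\).
\end{itemize}
Consequently, \(v_k=0\) can occur only when \(j \bmod p_k\) is a fixed point, \((a_k-1)j\equiv0\pmod{p_k}\).

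Second, the count. The solutions of \((a_k-1)j\equiv 0\pmod{p_k}\) in \(\Z/p_k\Z\) number \(\gcd(a_k-1,p_k)\), and each lifts to two values of \(j\) modulo \(N_k\). So it suffices to show \(\gcd(a_k-1,p_k)\ll_A\sqrt{p_k}\).

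Third, the gcd bound. Set \(g=\gcd(a_k-1,p_k)\).
\begin{itemize}
\item[(a)] Since \(g\mid p_k\), the off-diagonal entries \(b_k=bp_k\) and \(c_k=cp_k\) of \(A^k\) vanish mod \(g\), so \(A^k\equiv\operatorname{diag}(a_k,d_k)\pmod g\).
\item[(b)] By \eqref{eq:ar-pr}, \(A^k=p_kA-p_{k-1}I\), so \(a_k\equiv d_k\equiv -p_{k-1}\pmod g\).
\item[(c)] Since \(g\mid a_k-1\), this gives \(A^k\equiv I\pmod g\), hence \(g\mid N'_k\).
\item[(d)] By \eqref{eq:estimate-on-N}, \(N'_k\ll\lambda^{k/2}\), whether \(k\) is odd (\(p_{(k\pm1)/2}\)) or even (\(2p_{k/2}\)). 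Since \(p_k\asymp\lambda^{k}\), this gives \(g\ll_A\sqrt{p_k}\asymp\sqrt{N_k}\).
\end{itemize}

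Therefore the number of bad \(j\) is \(\ll_A\sqrt{N_k}\), and dividing by \(N_k\) gives the bound \(\ll_A N_k^{-1/2}\).

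The main obstacle is step (ii): verifying uniformly that the off-diagonal dispersive terms are negligible, so that \(v_k\neq 0\) whenever \(h_k\neq0\) in the non-fixed case. Lemma~\ref{lem:gcd} needs \(r<T\), which holds here since \(|q|<k<2k\). The exponent also needs care, because \(\gcd(k,|q|)\) can be as large as \(k/2\), which gives \(N'_{k}\) in the worst case.
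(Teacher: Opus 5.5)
Your proposal is correct and follows the paper's proof. Both arguments reduce vanishing to fixed points of the block map $m\mapsto a_km$ on $\Z/p_k\Z$, count these by $\gcd(a_k-1,p_k)$, and bound this gcd by $N'_k\ll_A\lambda^{k/2}$ using $A^k\equiv I\pmod{\gcd(a_k-1,p_k)}$; you are also more careful than the paper, making explicit the off-diagonal dispersive estimate that gives $v_k\neq0$ whenever $a_kj\not\equiv j\pmod{p_k}$ and the factor $2$ from lifting residues mod $p_k$ to indices mod $N_k$, and you need only the one-sided divisibility $\gcd(a_k-1,p_k)\mid N'_k$ rather than the paper's equality.
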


\begin{proof}
    By the proof of Claim \ref{claim:h-does-not-vanish}, we know that $v_k$ can vanish only when \(a_kj\equiv j\pmod{p_k}.\) The number of such residue classes modulo \(p_k\) is \(\gcd(a_k-1,p_k)\). Since $N'_k\ll_A \lambda^{k/2}$ and $N_k\asymp_A \lambda^k$, our result follows from the following claim.
    \begin{claim}
        \(\gcd(a_k-1,p_k)=N_k'.\)
    \end{claim}
    \begin{proof}
        First, we show that \(\gcd(a_k-1,p_k)\le N'_k\). Since \(\gcd(a_k-1,p_k)\mid p_k\), we have
        \begin{align*}
            \gcd(a_k-1,p_k)\mid b_k=bp_k, \qquad \gcd(a_k-1,p_k)\mid c_k=cp_k,
        \end{align*}
        and by definition \(\gcd(a_k-1,p_k)\mid (a_k-1).\) Because \(\det(A^k)=1\), it follows that \(d_k\equiv 1\pmod{\gcd(a_k-1,p_k)}\).
        Hence
        \begin{align*}
            A^k\equiv I\pmod{\gcd(a_k-1,p_k)},
        \end{align*}
        so by maximality of \(N'_k\), we obtain \(\gcd(a_k-1,p_k)\le N'_k.\)
        
        Conversely, since \(A^k\equiv I\pmod{N'_k}\), we have
        \begin{align*}
            N'_k\mid (a_k-1),
            \qquad
            N'_k\mid b_k=bp_k,
            \qquad
            N'_k\mid c_k=cp_k.
        \end{align*}
        Since \(\gcd(b,c)=1\), the last two divisibilities imply \(N'_k\mid p_k\). Therefore \(N'_k\) divides both \(a_k-1\) and \(p_k\), and hence \(N'_k\mid \gcd(a_k-1,p_k).\)
    \end{proof}
\end{proof}


\bibliographystyle{plain}
\bibliography{references}

\end{document}